\documentclass[5p,preprint]{elsarticle}





\usepackage{setspace}
\usepackage{float}
\usepackage[T1]{fontenc}
\usepackage{mathptmx}
\usepackage{amsthm}
\usepackage[active]{srcltx}
\usepackage{amsmath}
\usepackage{amssymb}
\usepackage{amsfonts}
\usepackage{mathdots}
\usepackage{oldgerm}
\usepackage{mathrsfs}
\usepackage{comment}
\usepackage{setspace}
\usepackage{tikz}
\usepackage{pgfplots}
\usepackage[scanall]{psfrag}
\usepackage{graphicx}
\usepackage{pst-all}
\usepackage{flushend}
\usepackage{subfig}
\usepackage{mdwlist}
\usepackage{color}
\usepackage{longtable}
\usepackage{verbatim}
\usepackage{siunitx}
\usetikzlibrary{decorations.pathreplacing}


\newcommand{\cC}{\mathcal{C}}

\newcommand{\cW}{\mathcal{W}}

\newcommand{\cL}{\mathcal{L}}

\newcommand{\cD}{\mathcal{D}}

\newcommand{\cF}{\mathcal{F}}



\usetikzlibrary{arrows}
\usetikzlibrary{fit}\usetikzlibrary{calc}
  \pgfdeclarelayer{background}
  \pgfsetlayers{background,main}

\theoremstyle{theorem}
\newtheorem{Prop}{Proposition}[section]

\newtheorem{Thm}[Prop]{Theorem}

\theoremstyle{definition}

\newcommand{\RN}[1]{\uppercase\expandafter{\romannumeral#1}}
\newcommand{\eps}{\varepsilon}

\newcommand{\N}{{\mathbb{N}}}

\newcommand{\R}{{\mathbb{R}}}

\newcommand{\vp}{\varphi}

\newcommand{\setdef}[2]{\left\{\ #1\ \left|\ \vphantom{#1} #2\ \right.\right\}}

\newcommand{\ds}[1]{{\rm \, d} #1 \,}

\DeclareMathOperator{\sgn}{sgn}
\DeclareMathOperator{\loc}{loc}
\DeclareMathOperator{\erf}{erf}

{\left(\begin{smallmatrix}}
{\end{smallmatrix}\right)}

{\left[\begin{smallmatrix}}
{\end{smallmatrix}\right]}



\newlength{\innersep}
\newlength{\maxlength}
\newlength{\dummylength}

\newcommand{\JordanBlock}[3]{
\setlength{\arraycolsep}{0pt}
\renewcommand{\arraystretch}{0}
\settowidth{\maxlength}{$#1$}
\settoheight{\dummylength}{$#1$}
\ifdim\dummylength>\maxlength
  \setlength{\maxlength}{\dummylength}
\fi
\settowidth{\dummylength}{$#2$}
\ifdim\dummylength>\maxlength
  \setlength{\maxlength}{\dummylength}
\fi
\settoheight{\dummylength}{$#2$}
\ifdim\dummylength>\maxlength
  \setlength{\maxlength}{\dummylength}
\fi
\setlength{\innersep}{0.1\maxlength}
\addtolength{\maxlength}{\innersep}
\addtolength{\maxlength}{\innersep}
\newcommand{\invisiblebox}{\phantom{\rule{\maxlength}{\maxlength}}}
\begin{array}{cccc}
  \tikz[remember picture] \node[outer sep=0,inner sep=\innersep] (a11) {$#1$}; &{\tikz[remember picture] \node[outer sep=0,inner sep=\innersep] (a21) {$#2$};}& & \invisiblebox\\
  &&\phantom{\rule{#3}{#3}} &\\
   \invisiblebox &\invisiblebox& &{\tikz[remember picture] \node[outer sep=0,inner sep=\innersep] (a43) {$#2$};} \\
   \invisiblebox &\invisiblebox& &
   {\tikz[remember picture] \node[outer sep=0,inner sep=\innersep] (a44) {$#1$};}
\end{array}
\tikz[remember picture, overlay] \draw (a11) edge[very thick] (a44);
\tikz[remember picture, overlay] \draw (a21) edge[very thick] (a43);
}


\sloppy


\begin{document}

\begin{frontmatter}

\title{Funnel cruise control\tnoteref{thanks}}
\tnotetext[thanks]{This work was supported by the German Research Foundation (Deutsche Forschungsgemeinschaft) via the grant BE 6263/1-1.}

\author[Paderborn]{Thomas Berger}\ead{thomas.berger@math.upb.de}
\author[Hamburg]{Anna-Lena Rauert}\ead{anna-lena.rauert@web.de}

\address[Paderborn]{Institut f\"ur Mathematik, Universit\"at Paderborn, Warburger Str.~100, 33098~Paderborn, Germany}
\address[Hamburg]{Haldesdorfer Str.~117, 22179~Hamburg, Germany}

\begin{keyword}
Cruise control, autonomous driving, adaptive control, funnel control.
\end{keyword}

\begin{abstract}
We consider the problem of vehicle following, where a safety distance to the leader vehicle is guaranteed at all times and a favourite velocity is reached as far as possible. We introduce the funnel cruise controller as a novel universal adaptive cruise control mechanism which is model-free and achieves the aforementioned control objectives. The controller consists of a velocity funnel controller, which directly regulates the velocity when the leader vehicle is far away, and a distance funnel controller, which regulates the distance to the leader vehicle when it is close so that the safety distance is never violated. We provide a rigorous proof for the feasibility of the overall controller design. The funnel cruise controller is illustrated by a simulation of three different scenarios which may occur in daily traffic.
\end{abstract}

\end{frontmatter}


%
\section{Introduction}\label{Sec:Intr}
%

With traffic steadily increasing, simple cruise control (see e.g.~\cite{AstrMurr08}), which holds the velocity on a constant level, becomes less useful. A controller which additionally allows a
vehicle to follow the vehicle in front of it while continually adjusting speed to maintain a safe distance is a suitable alternative. Various methods which achieve this are available in the literature, see e.g.\ the survey~\cite{XiaoGao10} on adaptive cruise control systems. A common method is the use of proportional-integral-derivative (PID) controllers, see e.g.~\cite{AstrMurr08,IoanXu93,IoanChie93,YanaKane98}, which however are not able to guarantee any safety.

Another popular method is model predictive control (MPC), where the control action is defined by repeated solution of a finite-horizon optimal control problem. A two-mode MPC controller is developed in~\cite{BageGarr04}, where the controller switches between velocity and distance control. The MPC method introduced in~\cite{LiLi11} incorporates the fuel consumption and driver desired response in the cost function of the optimal control problem. In~\cite{MagdAlth17} a method which guarantees both safety and comfort is developed. It consists of a nominal controller, which is based on MPC, and an emergency controller which takes over when MPC does not provide a safe solution.

Control methods based on control barrier functions which penalize the violation of given constraints have been developed in~\cite{AmesGriz14,MehrMa15}. While safety constraints are automatically guaranteed by this approach, it may be hard to find a suitable control barrier function. Furthermore, these methods are implemented as open-loop control inputs using quadratic programs, which require knowledge of the model parameters and hence are not robust in general. Another recent method is correct-by-construction adaptive cruise control~\cite{NilsHuss16}, which is also able to guarantee safety. However, the computations are based on a so called finite abstraction of the system (which is already expensive) and changes of the system parameters require a complete re-computation of the finite abstraction.

Drawbacks of the aforementioned approaches are that either safety cannot be guaranteed (as in~\cite{IoanXu93,IoanChie93,YanaKane98}) or the model must be known exactly (as in~\cite{AmesGriz14,BageGarr04,LiLi11,MagdAlth17,MehrMa15,NilsHuss16}). However, the requirements on driver assistance systems are increasing steadily. It is expected that in the near future autonomous vehicles will completely take over all driving duties. Therefore, a cruise control mechanism is desired which achieves both: under any circumstances (in particular, in emergency situations) the prescribed safety distance to the preceding vehicle is guaranteed and at the same time the parameters of the model, such as aerodynamic drag or rolling friction, need {\it not} be known exactly, i.e., the control mechanism is model-free. The latter property also guarantees that the controller is inherently robust, in particular with respect to uncertainties, modelling errors or external disturbances. Another requirement on the controller is that it should be simple in its design and of low complexity, and that it only requires the measurement of the velocity and the distance to the leading vehicle. We stress that in a lot of other approaches as e.g.~\cite{BageGarr04,IoanChie93,LiLi11,MagdAlth17} the position, velocity and/or acceleration of the leading vehicle must be known at each time.

In the present work we propose a novel control design which satisfies the above requirements. Our control design is based on the funnel controller  which was developed in~\cite{IlchRyan02b}, see also the survey~\cite{IlchRyan08} and the recent paper~\cite{BergHoan18}. The funnel controller is a low-complexity model-free output-error feedback of high-gain type. The funnel controller is an adaptive controller since the gain is adapted to the actual needed value by a time-varying (non-dynamic) adaptation scheme. It has been successfully applied e.g.\ in temperature control of chemical reactor models~\cite{IlchTren04}, control of industrial servo-systems~\cite{Hack17} and underactuated multibody systems~\cite{BergOtto19}, speed control of wind turbine systems~\cite{Hack14,Hack15b,Hack17}, current control for synchronous machines~\cite{Hack15a,Hack17}, DC-link power flow control~\cite{SenfPaug14}, voltage and current control of electrical circuits~\cite{BergReis14a}, oxygenation control during artificial ventilation therapy~\cite{PompAlfo14} and control of peak inspiratory pressure~\cite{PompWeye15}.

In our design we will distinguish two different cases. If the preceding vehicle is far away, i.e., the distance to it is larger than the safety distance plus some constant, then a velocity funnel controller will be active which simply regulates the velocity of the vehicle to the desired pre-defined velocity. If the preceding vehicle is close, then a distance funnel controller will be active which regulates the distance to the preceding vehicle to stay within a predefined performance funnel in front of the safety distance. The combination of these two controllers results in a funnel cruise controller which guarantees safety at all times. We like to stress that the distance funnel controller does not directly regulate the position of the vehicle, but a certain weighting between position and velocity; hence, a relative degree one controller suffices.

A conference proceedings version of the present paper has been published in~\cite{BergRaue18}. In the present journal version we consider the effect of additional disturbances in the model and have added a full proof of the main result in Theorem~\ref{Thm:funnel-acc}. Furthermore, we explain the controller design in more detail and discuss the presence of control constraints.

\subsection{Nomenclature}\label{Ssec:Nomencl}

\noindent{
\begin{tabular}[ht]{p{60pt}p{165pt}}
$\R_{\ge 0}$ &$=[0,\infty)$\\
$\mathcal{L}^\infty_{\loc}(I\!\to\!\R^n)$
  &  the set of locally essentially bounded functions $f:I\!\to\!\R^n$, $I\subseteq\R$ an interval\\
$\mathcal{L}^\infty(I\!\to\!\R^n)$
  &  the set of essentially bounded functions $f:I\!\to\!\R^n$\\
$\|f\|_\infty$ & = ${\rm ess\ sup}_{t\in I} \|f(t)\|$\\
$\mathcal{W}^{k,\infty}(I\!\to\!\R^n)$
  &  the set of $k$-times weakly differentiable functions $f:I\!\to\!\R^n$ such that $f,\ldots, f^{(k)}\in \mathcal{L}^\infty(I\!\to\!\R^n)$\\
$\mathcal{C}(V\!\to\!\R^n) $
  &  the set of continuous functions $f:V\!\to\! \R^n$, $V\subseteq\R^m$\\
$\left.f\right|_{W}$ & restriction of the function $f:V\!\to\!\R^n$ to $W\subseteq V$
\end{tabular}
}

\subsection{Framework and system class}\label{Ssec:SysClass}

In the present work we consider the framework of one vehicle following another, see Fig.~\ref{Fig:Framework}.

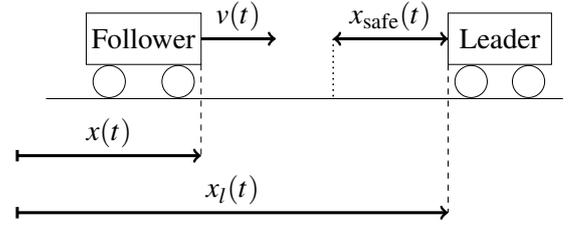
\begin{figure}[h!t]
  \centering
\resizebox{0.4\textwidth}{!}{
\begin{tikzpicture}

\draw (7,5) -- (16,5); \draw[dotted,line width=0.8pt] (12,6.05) -- (12,5);

\draw (14.4,5.3) circle (8pt); \draw (15.4,5.3) circle (8pt);\draw (14,5.6) rectangle (15.8,6.5); 
\draw (8.1,5.3) circle (8pt); \draw (9.3,5.3) circle (8pt); \draw (7.7,5.6) rectangle (9.7,6.5); 

\draw [<->,line width=1.5pt] (14,6.05)--node[above] {\Large $x_{\rm safe}(t)$}(12,6.05);

\draw[->,line width=1.5pt] (9.7,6.05) -- (11,6.05)  node[midway, above]{\Large $v(t)$};

\draw[dashed] (9.7,5.6) -- (9.7,4); \draw[dashed] (14,5.6) -- (14,3);
\draw[->,line width=1.5pt] (6.5,4) -- (9.7,4)  node[midway, above]{\Large $x(t)$};
\draw[->,line width=1.5pt] (6.5,3) -- (14,3)  node[midway, above]{\Large $x_l(t)$};
\draw[line width=1.5pt] (6.5,4.1) -- (6.5,3.9);
\draw[line width=1.5pt] (6.5,3.1) -- (6.5,2.9);

\node at (14.9,6.05) {\Large Leader};
\node at (8.7,6.05) {\Large Follower};
\end{tikzpicture}
}
\caption{Vehicle following framework.}
\label{Fig:Framework}
\end{figure}

By~$x_l$ we denote the position of the leader vehicle, while~$x$ and~$v$ denote the position and velocity of the follower vehicle. The change in momentum of the latter is given by the difference of the force~$F$ generated by the contact of the wheels with the road and the forces due to gravity~$F_g$ (including the changing slope of the road), the aerodynamic drag~$F_a$ and the rolling friction~$F_r$. Detailed modelling of these forces can be very complicated since all the individual components of the vehicle have to be taken into account. Therefore, we use the following simple models which are taken from~\cite[Sec.~3.1]{AstrMurr08}:
\begin{align*}
  F_g:&\ \R_{\ge 0}\to\R,\ t\mapsto m g \sin \theta(t),\\
  F_a:&\ \R_{\ge 0}\times \R\to\R,\ (t,v)\mapsto \tfrac12 \rho(t) C_d A v^2,\\
  F_r:&\ \R\to\R,\ v\mapsto m g C_r \sgn(v),
\end{align*}
where $m$ (in \si{\kilo\gram}) denotes the mass of the (following) vehicle, $g = \SI{9.81}{\metre\per\second^2}$ is the acceleration of gravity,~$\theta(t)\in [-\tfrac{\pi}{2}\si{\radian}, \tfrac{\pi}{2}\si{\radian}]$ and $\rho(t)$ (in \si{\kilo\gram\per\metre^3}) denote the slope of the road and the (bounded) density of air at time~$t$, resp.,~$C_d$ denotes the (dimensionless) shape-dependent aerodynamic drag coefficient and~$C_r$ the (dimensionless) coefficient of rolling friction, and~$A$ (in \si{\metre^2}) is the frontal area of the vehicle.

Since the discontinuous nature of the rolling friction causes some problems in the theoretical treatment and in the vehicle following framework the velocities are typically positive, we approximate the~$\sgn$ function by the smooth error function
\[
    \erf(z) = \frac{2}{\sqrt{\pi}} \int_0^z e^{-t^2}\ds{t},\quad z\in\R,
\]
using the property that
\[
    \forall\, z\in\R:\ \lim_{\alpha\to \infty} \erf(\alpha z) = \sgn(z).
\]
Therefore, we will use the following model for the rolling friction:
\begin{equation}\label{eq:slidingfriction2}
     F_r:\R\to\R,\ v\mapsto m g C_r \erf(\alpha v)
\end{equation}
for sufficiently large parameter~$\alpha>0$. For more sophisticated friction models we refer to~\cite{ArmsDupo94,LeinNijm04}.

The force~$F$ which is generated by the engine of the vehicle is usually given as torque curve (depending on the engine speed) times a signal which controls the throttle position, see~\cite[Sec.~3.1]{AstrMurr08}. Since the latter can be calculated from any given force~$F$ and velocity~$v$ (taking the current gear into account), here we assume that we can directly control the force~$F$, i.e., the control signal is $u(t) = F(t)$. The equations of motion for the vehicle are then given by
\begin{equation}\label{eq:Sys}
\begin{aligned}
    \dot x(t) &= v(t),\\
    m \dot v(t) &= u(t) - F_g(t) - F_a\big(t,v(t)\big) - F_r\big(v(t)\big) +\delta(t),
\end{aligned}
\end{equation}
with the initial conditions
\begin{equation}\label{eq:IC}
  x(0) = x^0\in\R,\quad v(0) = v^0\in\R,
\end{equation}
where $\delta\in \cL^\infty(\R_{\ge 0}\to\R)$ is a bounded disturbance which captures modelling errors, uncertainties and noises, which may be caused by unexpected potholes in the road for instance.

\subsection{Control objective}\label{Ssec:ContrObj}

Roughly speaking, the control objective is to design a control input~$u(t)$ such that~$v(t)$ is as close to a given favourite speed~$v_{\rm ref}(t)$ as possible, while at the same time a safety distance to the leading vehicle is guaranteed, i.e., $x_l(t) - x(t) \ge x_{\rm safe}(t)$. The safety distance $x_{\rm safe}(t)$ should prevent collision with the leading vehicle and is typically a function of the vehicle velocity, but could also be a constant or a function of other variables. In the literature different concepts are used, see e.g.~\cite{HongPark16,SantRaja03} and the references therein. A common model for the safety distance that we also use in the present paper is
\begin{equation}\label{eq:xsafe}
    x_{\rm safe}(t) = \lambda_1 v(t) + \lambda_2
\end{equation}
with positive constants~$\lambda_1$ (in \si{\second}) and~$\lambda_2$ (in \si{\metre}). The parameter~$\lambda_1$ models the time gap between the leader and follower vehicle and~$\lambda_2$ is the minimal distance when the velocity is zero. If for instance $\lambda_1 = \SI{0.5}{\second}$, then it would take the following vehicle $\SI{0.5}{\second}$ to arrive at the leading vehicle's present position.

We assume that the distance~$x_l(t) - x(t)$ to the leader vehicle as well as the velocity~$v(t)$ can be measured, i.e., they are available for the controller design. Apart from that, the controller design should be model-free, i.e., knowledge of the parameters~$m, \theta(t), \rho(t), C_d, C_r,$ and~$A$ as well as of the initial values~$x^0, v^0$ and the disturbance~$\delta(t)$ is not required. This makes the controller robust to modelling errors, uncertainties, noise and disturbances. Summarizing, the objective is to design a (nonlinear and time-varying) control law of the form
\begin{equation}\label{eq:u(t)}
    u(t) = F\big( t, v(t), x_l(t) - x(t)\big)
\end{equation}
such that, when applied to a system~\eqref{eq:Sys}, in the closed-loop system we have that for all $t\ge 0$
\begin{enumerate}
  \item[(O1)] $x_l(t) - x(t) \ge x_{\rm safe}(t)$,
  \item[(O2)] $|v(t) - v_{\rm ref}(t)|$ is as small as possible such that~(O1) is not violated.
\end{enumerate}

\subsection{Funnel control for relative degree one systems}\label{Ssec:FunConRD1}

The final control design will consist of two different funnel controllers for appropriate relative degree one systems. While, in view of the control objective, the system~\eqref{eq:Sys} cannot be rewritten as a relative degree one system, this is possible when velocity and distance control are considered separately. This separate consideration may serve as a motivation and therefore we briefly recall the concept of funnel control here.

The first version of the funnel controller was developed in~\cite{IlchRyan02b} and this version will be sufficient for our purposes. We consider nonlinear relative degree one systems governed by functional differential equations of the form
\begin{equation}\label{eq:FuncSys}
    \begin{aligned}
    \dot y(t) &= f\big(d(t), (Ty)(t)\big) + \gamma\, u(t),\\
    y(0) &= y^0\in\R,
    \end{aligned}
\end{equation}
where $\gamma>0$ is the high-frequency gain and
\begin{itemize}
  \item $d\in\cL^\infty(\R_{\ge 0}\to\R^p)$, $p\in\N$, is a disturbance;
  \item $f\in\cC(\R^p\times\R^q\to\R)$, $q\in\N$;
  \item $T:\cC(\R_{\ge 0}\to\R)\to\cL^\infty_{\loc}(\R_{\ge 0}\to\R^q)$ is an operator with the following properties:
      \begin{enumerate}
      \item[a)] $T$ maps bounded trajectories to bounded trajectories, i.e, for all $c_1>0$, there exists $c_2>0$ such that for all $\zeta\in \mathcal{C}(\R_{\ge 0}\rightarrow\R)\cap\cL^\infty(\R_{\ge 0}\rightarrow\R)$ we have $T(\zeta)\in \cL^\infty(\R_{\ge 0}\rightarrow\R^q)$ and
        \[ \|\zeta\|_\infty \le c_1\ \ \Longrightarrow\ \ \|T(\zeta)\|_\infty \le c_2.\]
      \item[b)] $T$ is causal, i.e., for all $t\geq 0$ and all $\zeta,\xi\in \cC(\R_{\ge 0}\to\R)$:
      \[
            \left.\zeta\right|_{[0,t]} = \left.\xi\right|_{[0,t]}\ \ \Longrightarrow\ \ \left.T(\zeta)\right|_{[0,t]} \stackrel{\rm a.e.}{=} \left.T(\xi)\right|_{[0,t]},
      \]
      where ``a.e.'' stands for ``almost everywhere''.
      \item[c)] $T$ is locally Lipschitz continuous in the following sense: for all $t\geq 0$ and all $\xi \in \cC([0,t]\to\R)$ there exist $\tau,\delta, c>0$ such that, for all $\zeta_1,\zeta_2 \in \cC(\R_{\ge 0}\to\R)$ with $\left.\zeta_i\right|_{[0,t]}=\xi$ and $|\zeta_i(s) - \xi(s)| < \delta$  for all $s\in[t,t+\tau]$ and $i=1,2$, we have
          \[
            \left\| \left.\big(T(\zeta_1)\!-\!T(\zeta_2)\big)\right|_{[t,t+\tau]}\right\|_\infty \!\!\le\! c \left\| \left.\big(\zeta_1\!-\! \zeta_2\big)\right|_{[t,t+\tau]}\right\|_\infty\!.
          \]
      \end{enumerate}
\end{itemize}
The functions $u,y:\R_{\geq 0}\to \R$ are called input and output of the system \eqref{eq:FuncSys}, resp. In~\cite{BergHoan18, IlchRyan09, IlchRyan02b, IlchRyan07} it is shown that the class of systems~\eqref{eq:FuncSys} encompasses linear and nonlinear systems with strict relative degree one and bounded-input, bounded-output stable internal dynamics. The operator~$T$ allows for infinite-dimensional (linear) systems, systems with hysteretic effects or nonlinear delay elements, and combinations thereof; $T$ is typically the solution operator corresponding to a (partial) differential equation which describes the internal dynamics of the system. In~\cite{IlchRyan02b, IlchRyan07} linear infinite-dimensional systems in a special Byrnes-Isidori form are considered, which is discussed in detail in~\cite{IlchSeli16}. More general classes involving nonlinear equations and unbounded operators are discussed in~\cite{BergPuch19,BergPuch19b}.

The funnel controller for systems~\eqref{eq:FuncSys} is of the form
\begin{equation}\label{eq:fun-con}
\boxed{
\begin{aligned}
    u(t) &= - k(t) e(t),\ &e(t) &= y(t) - y_{\rm ref}(t),\\
    k(t) &= \frac{1}{1-\varphi(t)^2 e(t)^2},
\end{aligned}
}
\end{equation}
where $y_{\rm ref}\in \cW^{1,\infty}(\R_{\ge 0}\to\R)$ is the reference signal, and guarantees that the tracking error~$e(t)$ evolves within a prescribed performance funnel
\begin{equation}
\mathcal{F}_{\varphi} := \setdef{(t,e)\in\R_{\ge 0} \times\R}{\varphi(t) |e| < 1},\label{eq:perf_funnel}
\end{equation}
which is determined by a function~$\varphi$ belonging to
\[
\Phi \!:=\!
\left\{
\!\varphi\!\in\!\cW^{1,\infty}(\R_{\ge 0}\!\to\!\R)
\left|\!\!\!
\begin{array}{l}
\text{$\varphi(s)>0$ for all $s>0$ and}\\\text{for all }\eps>0:\\
 \left.\!(\!1\!/\!\varphi\!)\right|_{[\eps,\infty)}\!\!\in\!\cW^{1,\infty}([\eps,\infty)\!\to\!\R)
\end{array}
\right.\!\!\!\!\!
\right\}\!\!.
\]
Thus, if the norm of the error~$e$ draws close to the funnel boundary~$1/\varphi$, the gain~$k$ increases and hence, the control is more aggressive. Otherwise, if the error~$e$ is close to zero, then the control is more relaxed with small gains~$k$.

The funnel boundary is given by the reciprocal of $\varphi$, see Fig.~\ref{Fig:funnel}. The case $\varphi(0)=0$ is explicitly allowed, meaning that no restriction is put on the initial value since $\varphi(0) |e(0)| < 1$; the funnel boundary $1/\varphi$ has a pole at $t=0$ in this case.

\begin{figure}[h]
\vspace*{-3mm}
\hspace{6mm}\includegraphics[width=8cm, trim=170 550 230 120, clip]{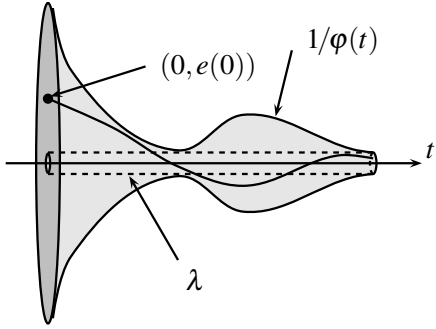}
\vspace*{-2mm}
\caption{Error evolution in a funnel $\mathcal F_{\varphi}$ with boundary $1\!/\!\varphi(t)$.}
\label{Fig:funnel}
\end{figure}

An important property is that each performance funnel $\mathcal{F}_{\varphi}$ is bounded away from zero, i.e., because of boundedness of $\varphi$ there exists $\lambda>0$ such that $1/\varphi(t)\geq\lambda$ for all $t > 0$. Thus, asymptotic accuracy, i.e., $\lim_{t\to\infty} e(t)=0$, is not possible in general, but the tracking error~$e(t)$ can be made as small as desired. We stress that the funnel boundary is not necessarily monotonically decreasing. Widening the funnel over some later time interval might be beneficial, e.g., when periodic disturbances are present or the reference signal changes strongly. For typical choices of funnel boundaries see e.g.~\cite[Sec.~3.2]{Ilch13}.

In~\cite{IlchRyan02b}, the existence of global solutions of the closed-loop system~\eqref{eq:FuncSys},~\eqref{eq:fun-con} is investigated. To this end, $y:[0,\omega)\to\R$, $\omega\in(0,\infty]$, is called a \emph{solution} of~\eqref{eq:FuncSys},~\eqref{eq:fun-con}, if $y(0) = y^0$ and $y$ is weakly differentiable and satisfies the differential equation in~\eqref{eq:FuncSys} with~$u$ as in~\eqref{eq:fun-con} for almost all $t\in[0,\omega)$; $y$ is called \emph{maximal}, if it has no right extension that is also a solution. Note that uniqueness of solutions of~\eqref{eq:FuncSys},~\eqref{eq:fun-con} is not guaranteed in general.

The following result is proved in~\cite{IlchRyan02b}.

\begin{Thm}\label{Thm:funnel}
Consider a system~\eqref{eq:FuncSys} with initial value $y^0\in \R$, a reference signal $y_{\rm ref}\in \cW^{1,\infty}(\R_{\ge 0}\to\R)$ and a funnel function $\varphi\in \Phi$ such that
\[
    \varphi(0) | y^0(0) - y_{\rm ref}(0)| < 1.
\]
Then the controller~\eqref{eq:fun-con} applied to~\eqref{eq:FuncSys} yields a closed-loop system which has a solution, and every maximal solution $y:[0,\omega)\to\R$ has the properties:
\begin{enumerate}
  \item $\omega=\infty$;
  \item all involved signals $y, k$ and $u$ are bounded;
  \item the tracking error evolves uniformly within the performance funnel in the sense
  \[
    \exists\, \eps>0\ \forall\, t>0:\ |e(t)| \le \varphi(t)^{-1}-\eps.
  \]
\end{enumerate}
\end{Thm}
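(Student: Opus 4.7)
The plan is to follow the standard funnel-control strategy: establish local existence of a solution on the open region where the controller is well-defined, derive an a priori bound that keeps the tracking error uniformly inside the funnel, and then conclude global existence via a maximality argument. Setting $e = y - y_{\rm ref}$, the closed-loop equation becomes
\[
\dot e(t) = f\big(d(t),(Ty)(t)\big) - \dot y_{\rm ref}(t) - \frac{\gamma\, e(t)}{1-\varphi(t)^2 e(t)^2},
\]
which is defined on the open set $\cD := \setdef{(t,e)\in\R_{\ge 0}\times\R}{\varphi(t)^2 e^2 < 1}$. The hypothesis $\varphi(0)|e(0)| < 1$ places the initial point inside $\cD$. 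Continuity of $f$, essential boundedness of $d$ and $\dot y_{\rm ref}$, property~c) of $T$, and local Lipschitzness of $\varphi$ allow application of a Carathéodory-type existence theorem (in the spirit of the appendix of~\cite{IlchRyan02b}), producing a maximal weakly differentiable solution $y:[0,\omega)\to\R$ whose error graph stays in $\cD$ throughout.

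The core step is the a priori estimate. First I would argue that $y$ is bounded on $[0,\omega)$: for any fixed $\tau\in(0,\omega)$, $1/\varphi$ is bounded on $[\tau,\omega)$ since $\varphi\in\Phi$, so the containment in $\cD$ forces $|e(t)|<1/\varphi(t)$ to be bounded on $[\tau,\omega)$, while $e$ is bounded on the compact $[0,\tau]$ by continuity; hence $y$ is bounded on $[0,\omega)$ by some $M$, and property~a) of $T$ gives $\|Ty\|_\infty\le M'$. By continuity of $f$ and $\dot y_{\rm ref}\in\cL^\infty$ we obtain a constant $C$ with $|f(d,Ty)-\dot y_{\rm ref}|\le C$ almost everywhere. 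Setting $V(t):=\varphi(t)^2 e(t)^2$, a direct computation yields
\[
\dot V = 2\varphi\dot\varphi\, e^2 + 2\varphi^2 e\big(f(d,Ty)-\dot y_{\rm ref}\big) - \frac{2\gamma V}{1-V},
\]
and the first two terms are bounded in absolute value by some constant $K$ depending on $\|\varphi\|_\infty$, $\|\dot\varphi\|_\infty$, $M$ and $C$. Hence whenever $V(t)\ge 1-\eps$ the third term exceeds $2\gamma(1-\eps)/\eps$, which is greater than $K$ for $\eps$ sufficiently small, forcing $\dot V(t)<0$. A standard comparison argument starting from $V(0)<1$ then produces $\eps^\star>0$ with $V(t)\le 1-\eps^\star$ on $[0,\omega)$, i.e.\ $|e(t)|\le\sqrt{1-\eps^\star}\,\varphi(t)^{-1}$, which is assertion~(iii).

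With the uniform funnel bound in hand, $k(t)=(1-V(t))^{-1}\le 1/\eps^\star$ is bounded and so is $u=-ke$, giving~(ii). If $\omega<\infty$, then as $t\to\omega^-$ the pair $(t,e(t))$ remains in a compact subset of $\cD$ and $\dot e$ is essentially bounded near $\omega$, so $y$ extends continuously to $\omega$ with $(\omega,e(\omega))\in\cD$; the local existence theorem can then be restarted beyond $\omega$, contradicting maximality, so $\omega=\infty$, which is~(i).

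The main obstacle is the a priori bound on $V$, specifically the subtleties that arise when $\varphi(0)=0$ or $\omega$ is small. In the first case $1/\varphi$ is singular at $0$ and cannot directly bound $e$ on an initial interval, but continuity of $e$ on $[0,\tau]$ still provides one, since the right-hand side of $\dot e$ is locally bounded near $t=0$ (note $k(0)=1$ when $\varphi(0)=0$); if $\omega\le\tau$ one handles $[0,\omega)$ entirely through this continuity bound. A secondary technicality is that, since $\varphi\in\cW^{1,\infty}$ and $e$ is only weakly differentiable, the chain-rule identity for $\dot V$ must be interpreted almost everywhere, but $V$ is absolutely continuous so the pointwise comparison principle applies as usual.
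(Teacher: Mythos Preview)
The paper does not prove this theorem itself; immediately before the statement it writes ``The following result is proved in~\cite{IlchRyan02b}'' and simply invokes it as a known background result. So there is no proof in this paper to compare your attempt against.

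Your argument is nonetheless the standard funnel-control proof and is correct in substance. Two minor points are worth tightening. First, your conclusion $|e(t)|\le\sqrt{1-\eps^\star}\,\varphi(t)^{-1}$ is a multiplicative bound rather than the additive form $|e(t)|\le\varphi(t)^{-1}-\eps$ asserted in~(iii); since $\varphi\in\cW^{1,\infty}$ is bounded the two are equivalent via $\eps=(1-\sqrt{1-\eps^\star})/\|\varphi\|_\infty$, but you should say so. Second, the bound $M$ on $y$---and hence $K$ and $\eps^\star$---depends on the auxiliary $\tau\in(0,\omega)$ you fix, which is harmless for the logic (you fix one $\tau$ and run the rest of the argument) but deserves an explicit remark. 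For comparison, the paper's proof of its own main result (Theorem~\ref{Thm:funnel-acc}, Step~5) uses the closely related alternative of estimating $\psi(t)\pm e(t)$ directly with $\psi=1/\varphi$, which delivers the additive form of~(iii) without the conversion step.
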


\subsection{Organization of the present paper}\label{Ssec:Org}

In Section~\ref{Sec:FCC} we present a novel funnel cruise controller which satisfies the control objectives as stated in Section~\ref{Ssec:ContrObj}. The controller is basically the conjunction of a velocity funnel controller and a distance funnel controller, both formulated for appropriate relative degree one systems. Those controllers are presented separately before the final controller design is stated and feasibility is proved. In Section~\ref{Sec:Sim} the performance of the controller is illustrated for some typical model parameters and scenarios from daily traffic. Some conclusions are given in Section~\ref{Sec:Concl}.

\section{Funnel cruise control}\label{Sec:FCC}

In this section we present our novel funnel cruise control design to achieve~(O1) and~(O2), which consists of a velocity funnel controller and a distance funnel controller. We first present those controllers separately before we state the unified controller design.

\subsection{Velocity funnel controller}\label{Ssec:VFC}

When the leader vehicle is far away we do not need to care about the control objective (O1) and simply need to regulate the velocity~$v$ to the favourite velocity~$v_{\rm ref}\in \cW^{1,\infty}(\R_{\ge 0}\to\R)$ which satisfies $v_{\rm ref}(t)\ge 0$ for all $t\ge 0$. For this purpose we can treat the velocity~$v$ as the output of system~\eqref{eq:Sys} and hence the velocity tracking error is given by $e_v(t) = v(t) - v_{\rm ref}(t)$. Since the first equation in~\eqref{eq:Sys} can be ignored in this case (it does not play a role for the input-output behavior), we may define $d(t):=\big( F_g(t) -\delta(t), \rho(t)\big)$ for $t\ge 0$ and
\[
    f_v:\R^3\to\R,\ (d_1,d_2,v)\mapsto \tfrac{1}{m} \left( d_1 + \tfrac12 d_2 C_d A v^2 + F_r(v)\right).
\]
Since~$\rho$ and~$\delta$ are assumed to be bounded we obtain that~$d$ is bounded and the second equation in~\eqref{eq:Sys} can be written as
\begin{equation}\label{eq:vdot}
    \dot v(t) =  \tfrac{1}{m} u(t) - f_v\big(d(t),v(t)\big),
\end{equation}
and hence belongs to the class of systems~\eqref{eq:FuncSys} with the identity operator~$Tv = v$. Then Theorem~\ref{Thm:funnel} yields feasibility of the velocity funnel controller
\begin{equation}\label{eq:fun-con-vel}
\begin{aligned}
    u_v(t) &= - k_v(t) e_v(t),\ &e_v(t) &= v(t) - v_{\rm ref}(t),\\
    k_v(t) &= \tfrac{1}{1-\varphi_v(t)^2 e_v(t)^2},
\end{aligned}
\end{equation}
where $\varphi_v\in\Phi$, when applied to system~\eqref{eq:Sys} with initial conditions~\eqref{eq:IC} such that $\varphi_v(0) |v^0-v_{\rm ref}(0)| < 1$.

We stress that since~$x$ has been ignored for the controller design above, the first equation in~\eqref{eq:Sys} may cause it to grow unboundedly. However, since~$x_l$ is assumed to be bounded, $x_l-x$ will eventually get small enough so that the distance funnel controller discussed in the following section becomes active. In the end, this will guarantee boundedness of~$x$ from above. Boundedness of $x$  from below is a consequence of~$v_{\rm ref}$ being non-negative.

\subsection{Distance funnel controller}\label{Ssec:DFC}

If the leader vehicle is close, then the main objective of the controller is to ensure that~(O1) is guaranteed, so that~$v(t)$ may be much smaller than~$v_{\rm ref}(t)$ if necessary. To this end, we introduce a performance funnel, defined by $\vp_d\in\Phi$ with $\vp_d(0)\neq 0$, which lies directly in front of the safety distance to the leader vehicle, see Fig.~\ref{Fig:DFC}. We set $\psi_d(\cdot) := \varphi_d(\cdot)^{-1}$.

\begin{figure}[h!t]
  \centering
\resizebox{0.48\textwidth}{!}{
\begin{tikzpicture}
\node at (5,7.7) {};
\node[draw] at (5,7) {\Large Velocity Control};
\node[draw] at (10.5,7) {\Large Distance Control};
\draw[dashed] (3,5) -- (7,5); \draw[dashed] (7,7) -- (7,0); \draw (7,5) -- (14,5);\draw (14,5) -- (16,5);
\draw[dashed] (14,7) -- (14,0); \draw[dotted,line width=0.8pt] (11.05,6) -- (11.05,0);

\draw (14.4,5.3) circle (8pt); \draw (15.4,5.3) circle (8pt);\draw (14,5.6) rectangle (15.8,6.3); 
\draw (8.1,5.3) circle (8pt); \draw (9.3,5.3) circle (8pt); \draw (7.7,5.6) rectangle (9.7,6.3); 

\draw(3,2) -- (7,2);\draw[dashed] (7,2) -- (14,2);\draw (14,2) -- (16,2);
\draw (14.4,2.3) circle (8pt); \draw (15.4,2.3) circle (8pt);\draw (14,2.6) rectangle (15.8,3.3); 
\draw (4.4,2.3) circle (8pt); \draw (5.6,2.3) circle (8pt); \draw (4,2.6) rectangle (6,3.3); 

\draw [<->,line width=1.8pt] (14,0)--node[above] {\LARGE $x_{\rm safe}(t)$}(11.05,0);
\draw[fill=black] (9,0) circle (2pt); \draw[dashed] (9,0)--node [above] {\LARGE $\psi_d(t)$} (7,0);
\node at (7.05,0) {\Large (};
\node at (11,0) {\Large )} ;

\node at (14.9,5.95) {\Large Leader};
\node at (14.9,2.95) {\Large Leader};
\node at (8.7,5.95) {\Large Follower};
\node at (5,2.95) {\Large Follower};
\end{tikzpicture}
}
\caption{Illustration of the distance funnel controller.}
\label{Fig:DFC}
\end{figure}
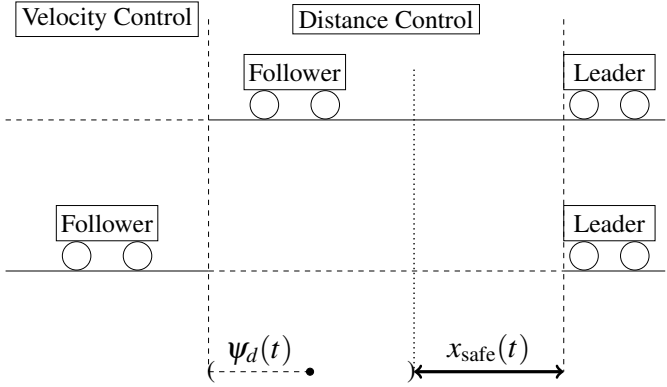

The aim is then to regulate the position~$x(t)$ to the middle of this performance funnel given by $x_l(t) - x_{\rm safe}(t) - \psi_d(t)$, where $x_l\in \cW^{1,\infty}(\R_{\ge 0}\to\R)$.

The corresponding distance tracking error is hence given by
\[
    e_d(t) = x(t) - x_l(t) + x_{\rm safe}(t) + \psi_d(t).
\]
In order to reformulate system~\eqref{eq:Sys} in the form~\eqref{eq:FuncSys} with appropriate output~$y$ and reference signal~$y_{\rm ref}$ we recall that $x_{\rm safe}(t) = \lambda_1 v(t) + \lambda_2$ and define
\begin{align*}
  y(t) &:= \lambda_1 v(t) + x(t),\\
  y_{\rm ref}(t) &:= x_l(t) - \lambda_2 - \psi_d(t).
\end{align*}
Then $e_d(t) = y(t) - y_{\rm ref}(t)$ and we further find that, invoking the first equation in~\eqref{eq:Sys},
\[
    \dot x(t) = - \tfrac{1}{\lambda_1} x(t) + \tfrac{1}{\lambda_1} y(t),\quad x(0) = x^0,
\]
hence
\[
    x(t) = e^{- \tfrac{1}{\lambda_1}t} x^0 + \int_0^t \tfrac{1}{\lambda_1} e^{- \tfrac{1}{\lambda_1}(t-s)} y(s) \ds{s} =: (T_1y)(t),\quad t\ge 0.
\]
Now define
\[
    Ty := \big( T_1y, y\big)^\top
\]
for all $y\in\cC(\R_{\ge 0}\to\R)$. It is straightforward to check that the operator $T:\cC(\R_{\ge 0}\to\R)\to\cL^\infty_{\loc}(\R_{\ge 0}\to\R^2)$, which is parameterized by~$x^0\in\R$, has the properties a)--c) as stated in Section~\ref{Ssec:FunConRD1}. Using equation~\eqref{eq:vdot} as well as~$d$ and~$f_v$ defined in Section~\ref{Ssec:VFC} we obtain
\begin{align}
  \dot y(t) &= \lambda_1 \dot v(t) + \dot x(t)\notag\\
  &= \tfrac{\lambda_1}{m} u(t) - \lambda_1 f_v\big(d(t),v(t)\big) - \tfrac{1}{\lambda_1} x(t) + \tfrac{1}{\lambda_1} y(t),\notag\\
  &= \tfrac{\lambda_1}{m} u(t) - \lambda_1 f_v\left(d(t),- \tfrac{1}{\lambda_1} (T_1y)(t) + \tfrac{1}{\lambda_1} y(t)\right)\notag\\
  &\quad - \tfrac{1}{\lambda_1} (T_1y)(t) + \tfrac{1}{\lambda_1} y(t),\notag\\
  &= \tfrac{\lambda_1}{m} u(t) - f_d\big(d(t), (Ty)(t)\big),\label{eq:ydot}
\end{align}
where
\begin{align*}
    & f_d:\R^4\to\R,\\
    & (d_1,d_2,\zeta_1,\zeta_2)\mapsto \lambda_1 f_v\left(d_1,d_2,- \tfrac{1}{\lambda_1} \zeta_1 + \tfrac{1}{\lambda_1} \zeta_2 \right) + \tfrac{1}{\lambda_1} \zeta_1 - \tfrac{1}{\lambda_1} \zeta_2.
\end{align*}
Clearly,~\eqref{eq:ydot} belongs to the class of systems~\eqref{eq:FuncSys}. Then Theorem~\ref{Thm:funnel} yields feasibility of the distance funnel controller
\begin{equation}\label{eq:fun-con-dist}
\begin{aligned}
    u_d(t) &= - k_d(t) e_d(t),\ &e_d(t) &= x(t) - x_l(t) \\
    k_d(t) &= \frac{1}{1-\varphi_d(t)^2 e_d(t)^2}, &&\quad  + x_{\rm safe}(t) + \psi_d(t),
\end{aligned}
\end{equation}
when applied to system~\eqref{eq:Sys} with initial conditions~\eqref{eq:IC} such that $\varphi_d(0) |\lambda_1 v^0 + x^0 - x_l(0) + \lambda_2 + \psi_d(0)| < 1$. Note that the assumption $\varphi_d(0)\neq 0$ is required in order to guarantee that $u_d(0)$ is well defined.

\subsection{Final control design and its feasibility}\label{Ssec:FinFunCon}

In Sections~\ref{Ssec:VFC} and~\ref{Ssec:DFC} we have seen that the separate velocity and distance funnel controllers are feasible when the initial conditions lie within the funnel boundaries at $t=0$; the control objective~(O1) is ignored in the case of velocity control and the control objective~(O2) is ignored in the case of distance control. However, it is our aim to simultaneously satisfy~(O1) and~(O2), i.e., always guarantee the safety distance and regulate the velocity to the favourite velocity as far as possible. This means that two additional scenarios must be possible:
\begin{itemize}
  \item if the follower vehicle, while using the velocity funnel controller~\eqref{eq:fun-con-vel}, enters the performance funnel in front of the safety distance, i.e., $x(t) = x_l(t) - x_{\rm safe}(t) - \psi_d(t)$, then the controller should switch to the distance funnel controller~\eqref{eq:fun-con-dist};
  \item when the distance funnel controller is active it should still be guaranteed that $v(t) < v_{\rm ref}(t) + \vp_v(t)^{-1}$, but it is possible that $v(t) \le v_{\rm ref}(t) - \vp_v(t)^{-1}$ when the leader decelerates; in the latter case it is not possible to switch back to~\eqref{eq:fun-con-vel}.
\end{itemize}

A controller which combines~\eqref{eq:fun-con-vel} and~\eqref{eq:fun-con-dist} and takes the above conditions into account faces an immediate problem: The controller~\eqref{eq:fun-con-dist} has a singularity when  $x(t) = x_l(t) - x_{\rm safe}(t) - \psi_d(t)$ since $k_d(t)\nearrow\infty$ at such points. Likewise, $k_v(t)\nearrow \infty$ for points where $v(t) = v_{\rm ref}(t) - \vp_v(t)^{-1}$, i.e., when a strong deceleration is necessary. To resolve these problems, the minimum of the control signals~$u_v(t)$ and~$u_d(t)$ is chosen in the region where the velocity performance funnel and the distance performance funnel intersect, i.e., when $(t,e_v(t))\in\cF_{\vp_v}$ and $(t,e_d(t))\in\cF_{\vp_d}$; see also Fig.~\ref{Fig:SWL} for an illustration. In particular, this guarantees that the overall funnel cruise controller given in~\eqref{eq:fun-con-fin} on the next page is continuous, since, roughly speaking, the minimum ``smoothes'' the input in the region where~$u_v$ and~$u_d$ overlap.

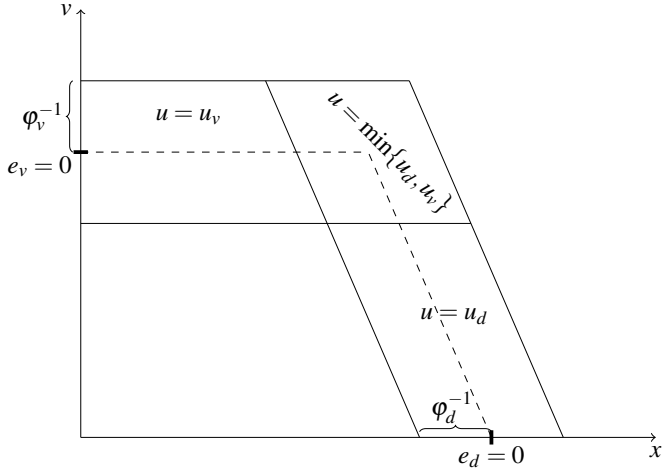
\begin{figure}[H]
  \centering
  \resizebox{0.49\textwidth}{!}{
\begin{tikzpicture}
\draw[->] (0,3) -- (0,9);
\draw (0,8) -- (4.5714,8);
\draw[dashed](0,7)--(4,7);
\node at (1.5,7.5) {$u=u_v$};
\node[left] at (0,6.8) {$e_v=0$};
\draw[line width=1.5] (0.1,7)--(-0.1,7);
\node[left] at (0,9) {$v$};
\node[rotate=-45] at (4.3,7) {$u=\min\{u_d,u_v\}$};
\draw[decorate, decoration={brace}, xshift=-0.7ex] (0,7)-- node [left] {$\varphi_v^{-1}$}(0,8);
\draw (0,6) -- (5.4286,6);
\draw(6.7143,3)--(4.5714,8);
\draw[dashed](5.7143,3)--(4,7);
\draw[line width =1.5] (5.7143,2.9)--(5.7143,3.1);
\node[below] at (5.7143,3) {$e_d=0$};
\draw(4.7143,3)--(2.5714,8);
\node at (5.2,4.7) {$u=u_d$};
\draw[decorate, decoration={brace}, yshift=0.5ex] (4.7143,3)-- node[above,pos=0.45] {$\varphi_d^{-1}$} (5.7143,3);
\draw[->] (0,3) -- (8,3);
\node[below] at (8,3) {$x$};
\end{tikzpicture}
}
\caption{Illustration of the final control design.}
\label{Fig:SWL}
\end{figure}

We emphasize again that the controller design~\eqref{eq:fun-con-fin} allows that the velocity of the follower falls out of the velocity funnel, i.e., $v(t) \le v_{\rm ref}(t) - \vp_v(t)^{-1}$ is possible when the distance funnel controller is active and the leader strongly decelerates. In order for the follower to be able to accelerate again the distance error~$e_d$ is held within~$\cF_{\varphi_d}$, but~$e_d(t)<0$ so that $u_d(t)>0$ and hence the follower accelerates -- until the velocity is back inside the velocity funnel and the velocity controller eventually takes over.

\begin{figure*}[h!tb]
\vspace*{2mm}
\begin{equation}\label{eq:fun-con-fin}
\boxed{
\begin{aligned}
u(t) &= \left\{\begin{array}{rl} -k_v(t) e_v(t), & e_d(t)\le - \vp_d(t)^{-1} \ \wedge\ (t,e_v(t))\in\cF_{\vp_v},\\[2mm]
-k_d(t) e_d(t), &  e_v(t)\le - \vp_v(t)^{-1} \ \wedge\ (t,e_d(t))\in\cF_{\vp_d},\\[2mm]
\min\{-k_v(t) e_v(t), -k_d(t) e_d(t)\}, & (t,e_v(t))\in\cF_{\vp_v}\ \wedge\ (t,e_d(t))\in\cF_{\vp_d},
\end{array}\right.\\
 k_v(t) &= \frac{1}{1-\varphi_v(t)^2 e_v(t)^2},\quad e_v(t) = v(t) - v_{\rm ref}(t),\\
 k_d(t) &= \frac{1}{1-\varphi_d(t)^2 e_d(t)^2},\quad e_d(t) = x(t) - x_l(t) + x_{\rm safe}(t) + \vp_d(t)^{-1}.
\end{aligned}
}
\end{equation}
\end{figure*}

For later purposes we define the relatively open set
\begin{equation}\label{eq:cD}
    \cD:= \cD_v\cup \cD_d\cup\cD_v^d,
\end{equation}
where
\begin{align*}
\cD_v&\!:=\!\setdef{\!\!\!\!(t,x,v)\!\in\!\R_{\ge 0}\!\times\!\R^2\!\!\!}{\small\!\!\!\!\!\!\!\begin{array}{l} \big(t,v\!-\!v_{\rm ref}(t)\big)\!\in\!\cF_{\varphi_v}\ \wedge\ \\[1mm] \big( \lambda_1 v\!+\!x\!\le\!x_l(t)\!-\!\lambda_2\!-\!2 \psi_d(t) \!\big)\end{array}\!\!\!\!\!\!\!\!}\!\!,\\
\cD_d&\!:=\!\setdef{\!\!\!\!(t,x,v)\!\in\!\R_{\ge 0}\!\times\!\R^2\!\!\!}{\small\!\!\!\!\!\!\!\begin{array}{l}\big( v\!\le\!v_{\rm ref}(t)\!-\! \varphi_v(t)^{\!-1\!}\big) \ \wedge\ \\[1mm] \big(t, \lambda_1 v\!+\!x\!-\!x_l(t)\!+\!\lambda_2\!+\!\psi_d(t)\!\big)\!\in\!\cF_{\varphi_d}\end{array}\!\!\!\!\!\!\!\!}\!\!,\\
\cD_v^d&\!:=\!\setdef{\!\!\!\!(t,x,v)\!\in\!\R_{\ge 0}\!\times\!\R^2\!\!\!}{\small\!\!\!\!\!\!\!\begin{array}{l} \big(t,v\!-\!v_{\rm ref}(t)\big)\!\in\!\cF_{\varphi_v}\ \wedge\ \\[1mm] \big(t, \lambda_1 v\!+\!x\!-\!x_l(t)\!+\!\lambda_2\!+\!\psi_d(t)\!\big)\!\in\!\cF_{\varphi_d}\end{array}\!\!\!\!\!\!\!\!}\!\!.
\end{align*}

In the remainder of this section we give the feasibility result for the controller~\eqref{eq:fun-con-fin} and its proof. Before we do this, we like to emphasize that funnel cruise control is not a mere application of funnel control, in particular Theorem~\ref{Thm:funnel}. While the control objective is reformulated to fit into the methodology of funnel control, here we restrict the evolution of the state variables in a much more complex way. In classical funnel control, as recalled in Section~\ref{Ssec:FunConRD1}, it is required that the tracking error $e(t) = y(t) - y_{\rm ref}(t)$ satisfies $|e(t)|< \varphi(t)^{-1}$ for some~$\varphi\in\Phi$, i.e.,~$y(t)$ is contained in the interval~$\big(y_{\rm ref}(t)-\varphi(t)^{-1},y_{\rm ref}(t)+\varphi(t)^{-1}\big)$. In funnel cruise control, the graph of any solution~$(x,v)$ of~\eqref{eq:Sys} is  restricted to~$\cD$, where for a fixed $t\ge 0$ the set $\{(x,v)\mid (t,x,v)\in\cD\}\subseteq \R^2$ is non-convex, see also Fig.~\ref{Fig:SWL}. Therefore, it was necessary to develop a completely new proof technique which is beyond classical funnel control techniques.

\begin{Thm}\label{Thm:funnel-acc}
Consider a system~\eqref{eq:Sys} with initial conditions~\eqref{eq:IC}, a favourite velocity $v_{\rm ref}\in \cW^{1,\infty}(\R_{\ge 0}\to\R)$ with  $v_{\rm ref}(t)\geq 0$ for all $t\geq 0$, position of the leader vehicle $x_l\in \cW^{1,\infty}(\R_{\ge 0}\to\R)$, safety distance $x_{\rm safe}$ as in~\eqref{eq:xsafe} with $\lambda_1,\lambda_2>0$ and funnel functions $\vp_v, \vp_d \in \Phi$, such that $\varphi_d(0)\neq 0$  and
\[
    \big(0,x^0,v^0)\in \cD.
\]
Then the funnel cruise controller~\eqref{eq:fun-con-fin} applied to~\eqref{eq:Sys} yields a closed-loop system which has a solution, and every solution can be extended to a maximal solution $(x,v):[0,\omega)\to\R^2$, $\omega\in(0,\infty]$, which has the properties:
\begin{enumerate}
  \item $\omega=\infty$;
  \item all involved signals $x$, $v$ and $u$ are bounded;
	\item there exists $\eps>0$ so that for all $t> 0$
\begin{align*}
\hspace*{-5mm}e_v(t)&\leq \varphi_v(t)^{-1}-\eps,\quad e_d(t)\leq \varphi_d(t)^{-1}-\eps, \\
\hspace*{-5mm} \eps&\leq\max\{0,\varphi_v(t)^{-1}+e_v(t)\}+\max\{0,\varphi_d(t)^{-1}+e_d(t)\}.
\end{align*}

\end{enumerate}
 \end{Thm}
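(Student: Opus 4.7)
The plan is to first establish local existence via standard ODE theory, extend to a maximal solution $(x,v):[0,\omega)\to\R^2$ with graph in $\cD$, and then show it stays in a compact subset of $\cD$ so that $\omega=\infty$ and the stated bounds hold. The core argument hinges on a structural one-sided bound on the $\min$-rule in~\eqref{eq:fun-con-fin}.

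\textbf{Local existence and state bounds.} I would first verify that the closed-loop vector field is continuous on $\cD$: at the interior interface $e_d=-\varphi_d^{-1}$ (between cases~1 and~3), $k_d\to\infty$ and $e_d<0$ together force $-k_d e_d\to+\infty$, so the $\min$ in case~3 collapses to $-k_v e_v$, matching case~1; the interface $e_v=-\varphi_v^{-1}$ between cases~2 and~3 is analogous. Standard ODE theory then yields a maximal absolutely continuous solution $(x,v):[0,\omega)\to\R^2$ with graph in $\cD$ satisfying the usual non-extendability property. From the funnel inclusions built into $\cD$ one reads off $v<v_{\rm ref}+\varphi_v^{-1}$ (in $\cD_v\cup\cD_v^d$) or $v\le v_{\rm ref}-\varphi_v^{-1}$ (in $\cD_d$), which together with the standing hypotheses bounds $v$ above; the symmetric lower bound follows analogously. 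The inclusion $x+\lambda_1 v\le x_l-\lambda_2$ on all of $\cD$ together with the bound on $v$ bounds $x$ above, and $x+\lambda_1 v>x_l-\lambda_2-2\varphi_d^{-1}$ on $\cD_d\cup\cD_v^d$ together with $\dot x=v$ bounds $x$ below on $[0,\omega)$.

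\textbf{Uniform distance from the upper funnel boundaries.} The pivotal observation is that \emph{whenever $e_v(t)>0$, $u(t)\le -k_v(t)e_v(t)$}: such a point cannot lie in case~2 (which requires $e_v\le -\varphi_v^{-1}<0$), and the $\min$ in case~3 is bounded above by $-k_v e_v$. Substituting into~\eqref{eq:Sys} and using the state bounds gives, on $\{t\in[0,\omega):e_v(t)>0\}$, the differential inequality
\[
    \dot e_v(t)\le -\tfrac{k_v(t)}{m}e_v(t)+C
\]
for a constant $C>0$ depending on $\|F_g\|_\infty,\|F_r\|_\infty,\|\delta\|_\infty,\|\dot v_{\rm ref}\|_\infty$ and the $v$-bound. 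The standard funnel-control argument of~\cite{IlchRyan02b} underlying Theorem~\ref{Thm:funnel} then produces $\eps_v>0$ with $e_v(t)\le\varphi_v(t)^{-1}-\eps_v$ on $(0,\omega)$: were $e_v$ to approach $\varphi_v^{-1}$, $k_v$ would blow up and the right-hand side would dominate $\tfrac{d}{dt}(\varphi_v^{-1})$, repelling $e_v$. The symmetric fact $u\le -k_d e_d$ on $\{e_d>0\}$, combined with $\dot e_d = v+\lambda_1\dot v+\dot\psi_d-\dot x_l$ (in which $\dot v$ enters linearly through $u$), yields $\eps_d>0$ with $e_d(t)\le\varphi_d(t)^{-1}-\eps_d$.

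\textbf{Composite lower bound and conclusion.} The last inequality in~(iii) is the uniform version of the fact that the trajectory stays away from the forbidden region $\{e_v\le-\varphi_v^{-1}\}\cap\{e_d\le-\varphi_d^{-1}\}$ outside $\cD$. In $\cD_v$ the controller $-k_v e_v$ is large \emph{positive} as $e_v\searrow-\varphi_v^{-1}$ (both $e_v<0$ and $k_v\to\infty$), and the funnel argument of the previous step with reversed sign yields $e_v\ge-\varphi_v^{-1}+\eps_v^-$ throughout $\cD_v$; symmetrically $e_d\ge-\varphi_d^{-1}+\eps_d^-$ throughout $\cD_d$. In $\cD_v^d$, if both $e_v$ and $e_d$ simultaneously approached their lower funnel boundaries, both $-k_v e_v$ and $-k_d e_d$ would blow up as large positive quantities, so $u=\min\{-k_v e_v,-k_d e_d\}\to+\infty$ and $\dot v,\dot e_v,\dot e_d$ would all become large positive, repelling the composite sum from $0$. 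Combining these bounds, $k_v$ and $k_d$ are uniformly bounded on their respective active regions, $u$ is bounded, and the graph of $(x,v)$ lies in a compact subset of $\cD$; maximality then forces $\omega=\infty$.

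\textbf{Main obstacle.} The principal difficulty is that this is not a single-funnel problem but a coupled two-funnel problem with a case-based controller, so Theorem~\ref{Thm:funnel} does not apply directly. The key unlocking observation is the one-sided structural bound $u\le -k_v e_v$ on $\{e_v>0\}$ and symmetrically $u\le -k_d e_d$ on $\{e_d>0\}$, which follows for free from the $\min$-rule in case~3 together with the case-splitting rule; it decouples the two upper-boundary analyses and allows the classical funnel argument to be imported branch by branch. The composite lower-boundary inequality in~(iii) has no analogue in classical funnel control and requires an additional symmetric case analysis exploiting that $-k_v e_v$ and $-k_d e_d$ become large \emph{positive}, not negative, as the respective errors approach their lower funnel boundaries from above.
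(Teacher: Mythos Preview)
Your overall architecture is right and you have correctly identified the decisive structural observation: the $\min$-rule in~\eqref{eq:fun-con-fin} gives the one-sided bounds $u\le -k_ve_v$ on $\{e_v>0\}$ and $u\le -k_de_d$ on $\{e_d>0\}$, which decouple the two \emph{upper} funnel-boundary analyses and let the classical argument of~\cite{IlchRyan02b} run branch by branch. Your treatment of the composite lower bound in~(iii) is also along the same lines as the paper's Case~III in Step~5, although the paper breaks it into three sub-cases (depending on whether one of $\psi_v+e_v$, $\psi_d+e_d$ is already $\le 0$ at the last contact time $t_0$) and uses an auxiliary lemma (its Step~2) to pin down which branch of $u$ is active near the interfaces $e_v=-\psi_v$ and $e_d=-\psi_d$.

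There is, however, a genuine gap in your ``state bounds'' paragraph. The sentence ``the symmetric lower bound follows analogously'' is not correct: in $\cD_d$ the velocity constraint is $v\le v_{\rm ref}-\varphi_v^{-1}$, an \emph{upper} bound only, and the distance funnel gives a two-sided bound on $y=\lambda_1 v+x$, not on $v$ alone. So nothing in the funnel inclusions prevents $v\to -\infty$ in $\cD_d$ provided $x\to +\infty$ in lockstep. This in turn breaks your argument for $x$ bounded above, since $x\le x_l-\lambda_2-\lambda_1 v$ only yields an upper bound on $x$ once $v$ is already bounded below; you have a circular dependency.

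The paper resolves this by a genuinely different device (its Step~4, parts 4)--6)). It introduces the sets $M_v,M_d\subseteq[0,\omega)$ according to which branch of $u$ is active, proves they are closed, and then bounds $x$ from above via the variation-of-constants formula
\[
x(t)=e^{-t/\lambda_1}x^0+\int_0^t \tfrac{1}{\lambda_1}e^{-(t-s)/\lambda_1}\,y(s)\,\mathrm{d}s,
\]
splitting the integral over $M_v\cap[0,t]$ (where $x$ and $v$ are separately bounded by the velocity funnel) and $M_d\cap[0,t]$ (where $y$ is bounded by the distance funnel). Only \emph{after} $x$ is shown bounded above does the lower bound on $v$ in $M_d$ follow, by contradiction. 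The lower bound on $x$ is also not automatic: in the interior of $\cD_v$ there is no distance-funnel floor, and the paper uses the mean value theorem together with $v_{\rm ref}\ge 0$ (so $v>-\varphi_v^{-1}$ on $M_v$) and the last exit time from $M_d$ to close the argument. Your sketch ``$\dot x=v$ bounds $x$ below'' gestures at this but does not supply the mechanism; in particular it does not explain why the hypothesis $v_{\rm ref}\ge 0$ is needed, which is precisely where it enters.
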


\begin{proof}
\underline{\textit{Ste\smash{p} 1}:} We show existence of a maximal solution. Use~$\cD$ as in~\eqref{eq:cD}, define  $\psi_v(\cdot) := \varphi_v(\cdot)^{-1}$, $\psi_d(\cdot) := \varphi_d(\cdot)^{-1}$,
\begin{align*}
f:\R_{\ge 0}\!\times\!\R\!\to\!\R,~(t,v)\mapsto -\tfrac{1}{m}\Big(F_a(t,v)+F_g(t)+F_r(v) -\delta(t)\Big),
\end{align*}
and~$F$ as in~\eqref{eq:F} on the next page.
\begin{figure*}[h!tb]
\vspace*{2mm}
\begin{align}\label{eq:F}
F:\cD \to \R^2,\ (t,x,v)\mapsto
\begin{cases}
\begin{pmatrix} v \\ -\tfrac{1}{m} \frac{v-v_{\rm ref}(t)}{1-\varphi_v(t)^2\vert v-v_{\rm ref}(t)\vert^2}+f(t,v)\end{pmatrix}, & \mbox{if } (t,x,v)\in\cD_v\\\\
\begin{pmatrix} v \\ -\tfrac{1}{m}\frac{x - x_l(t) +\lambda_1v+\lambda_2 + \psi_d(t)}{1-\varphi_d(t)^2\vert x - x_l(t) + \lambda_1v+\lambda_2 + \psi_d(t)\vert^2}+f(t,v)\end{pmatrix}, & \mbox{if } (t,x,v)\in\cD_d,\\\\
\begin{pmatrix} v \\ \tfrac{1}{m}\min\left\{-\frac{v-v_{\rm ref}(t)}{1-\varphi_v(t)^2\vert v-v_{\rm ref}(t)\vert^2},-\frac{x - x_l(t) + \lambda_1v+\lambda_2 + \psi_d(t)}{1-\varphi_d(t)^2\vert x - x_l(t) + \lambda_1v+\lambda_2 + \psi_d(t)\vert^2}\right\}+f(t,v)\end{pmatrix}, & \mbox{if } (t,x,v)\in\cD^d_v.
\end{cases}\\[4mm]
\cline{1-1}\  \nonumber
\end{align}\vspace{-12mm}
\end{figure*}
Then the closed-loop initial-value problem~\eqref{eq:fun-con-fin},~\eqref{eq:Sys} is equivalent to
\begin{align*}
\begin{pmatrix}\dot{x}(t) \\ \dot{v}(t)\end{pmatrix}=F\big(t,x(t),v(t)\big),~~~\big(x(0), v(0)\big) = \big(x^0,v^0\big).
\end{align*}
Since~$F_g, F_a$ and~$F_r$ as in~\eqref{eq:slidingfriction2} are continuous and $\delta\in\cL^\infty(\R_{\ge 0}\to\R)$ it follows that~$f$ is measurable and locally integrable in~$t$ and continuous in~$v$. Furthermore,
$\varphi_v,~\varphi_d,~v_{\rm ref}$, and $x_l$ are continuous, thus~$F$ is measurable and locally integrable in~$t$ and continuous in~$v$. Since moreover the set~$\cD$ is  relatively open in $\R_{\ge 0}\times\R^2$ and satisfies ${(0,x^0,v^0)\in\cD}$, by~\cite[\S\,10, Thm.~\RN{20}]{Walt98} there exists a weakly differentiable solution, which can be extended to a maximal solution $(x,v):[0,\omega)\to\R^2$, $\omega\in(0,\infty]$. Furthermore, the closure of the graph of~$(x,v)$ is not a compact subset of~$\cD$.\\
For later use we divide the set $[0,\omega)$ into the two parts
\begin{equation*}
M_v\!:=\!\setdef{\!\!\!t\!\in\![0,\omega)\!\!\!}{\small\!\!\!\!\!\!\begin{array}{l} \big( t,x(t),v(t)\big)\in\cD_v \ \vee\ \Big(\big(t,x(t),v(t)\big)\in\cD_v^d \ \wedge \  \\ \min\{-k_v(t)e_v(t),-k_d(t)e_d(t)\}\!=\!-k_v(t)e_v(t)\Big)\end{array}\!\!\!\!\!\!\!\!\!}
\end{equation*}
and
\begin{equation*}
M_d\!:=\!\setdef{\!\!\!t\!\in\![0,\omega)\!\!\!}{\small\!\!\!\!\!\!\begin{array}{l} \big(t,x(t),v(t)\big)\in\cD_d \ \vee\ \Big(\big(t,x(t),v(t)\big)\in\cD_v^d \ \wedge \\ \min\{-k_v(t)e_v(t),-k_d(t)e_d(t)\}\!=\!-k_d(t)e_d(t)\Big)\end{array}\!\!\!\!\!\!\!\!\!}\!\!.
\end{equation*}

\underline{\textit{Ste\smash{p} 2}:} We show that for all $\tau\in(0,\omega)$ we have
\begin{align}
  e_v(\tau) = -\psi_v(\tau)\ \ \Longrightarrow\ \  &\exists\, \delta>0\ \forall\, t\in (\tau-\delta,\tau+\delta):\nonumber \\
   &\qquad\quad u(t) = k_d(t) e_d(t),\label{eq:ev=phiv->u=kded}\\
  e_d(\tau) = -\psi_d(\tau)\ \ \Longrightarrow\ \  &\exists\, \delta>0\ \forall\, t\in (\tau-\delta,\tau+\delta):\nonumber\\
   &\qquad\quad u(t) = k_v(t) e_v(t).\label{eq:ed=phid->u=kvev}
\end{align}
We prove~\eqref{eq:ev=phiv->u=kded}; the proof of~\eqref{eq:ed=phid->u=kvev} is analogous and omitted. First observe that $\tau\in M_d$. Since $e_d,~k_d$ are continuous in $\tau$, we have that
\[
\exists\,\delta_1>0~ \forall\, s\in(\tau-\delta_1,\tau+\delta_1):\
\vert k_d(\tau)e_d(\tau)-k_d(s)e_d(s)\vert < 1,
\]
and hence, by reverse triangle inequality,
\begin{align*}
-k_d(s)e_d(s)\leq\vert k_d(s)e_d(s)\vert<1+\vert k_d(\tau)e_d(\tau)\vert.
\end{align*}
Since $k_v(s)\to \infty$ for $s\to \tau$ and, for $\delta_1$ small enough, $e_v(s) < 0$ for $|\tau-s|< \delta_1$, we find that
\begin{multline*}
\exists\, \delta_2>0\ \forall\, s \text{ with } \big(s,x(s),v(s)\big)\in\cD_v^d,~\vert \tau-s\vert <\delta_2:\\
-k_v(s)e_v(s)> 1+\vert k_d(\tau)e_d(\tau)\vert.
\end{multline*}
Then~\eqref{eq:ev=phiv->u=kded} follows for $\delta:=\min\{\delta_1,\delta_2\}$.

\underline{\textit{Ste\smash{p} 3}:} We show that~$M_v$ and~$M_d$ are closed in~$[0,\omega)$. We prove the statement for~$M_v$; for~$M_d$ it is analogous and omitted. Let $\tau \in[0,\omega)$ be such that
\[
    \forall\, \delta>0:\ (\tau-\delta,\tau+\delta) \cap M_v \neq \emptyset.
\]
We show that $\tau\in M_v$. If $(\tau,x(\tau),v(\tau))\in\mathring\cD_v$, the interior of $\cD_v$, then clearly $\tau\in M_v$. If $e_d(\tau) = -\psi_d(\tau)$, then $\tau\in M_v$ follows from Step~2. The case $(\tau,x(\tau),v(\tau))\in\cD_d$ is not possible, because otherwise there exists $\delta>0$ such that $(\tau-\delta,\tau+\delta) \cap M_v = \emptyset$, a contradiction; this is clear for $(\tau,x(\tau),v(\tau))\in\mathring\cD_d$ and for $e_v(\tau) = -\psi_v(\tau)$ it follows from Step~2. It remains to consider the case $(\tau,x(\tau),v(\tau))\in\cD_v^d$. By assumption there exists a sequence $(t_n)_{n\in\N} \subseteq M_v$ such that $\lim_{n\to\infty} t_n = \tau$. Choose $\delta>0$ and, if necessary, a subsequence of $(t_n)$ which we again denote by $(t_n)$, such that
\begin{align*}
      & \forall\, t\in (\tau-\delta,\tau+\delta):\ (t,x(t),v(t))\in\cD_v^d\\
       \text{and}\quad &\forall\, n\in\N:\ t_n\in (\tau-\delta,\tau+\delta).
\end{align*}
If $\tau = 0$, then consider $[0,\delta)$ instead of $(\tau-\delta,\tau+\delta)$. Then we find, by continuity of~$u$,~$k_v$ and~$e_v$ on $(\tau-\delta,\tau+\delta)$ that
\[
  u(\tau) = \lim_{n\to\infty} u(t_n) \stackrel{t_n\in M_v}{=} \lim_{n\to\infty} -k_v(t_n) e_v(t_n) = -k_v(\tau) e_v(\tau),
\]
by which $\tau\in M_v$.

\underline{\textit{Ste\smash{p} 4}:} We show that~$x$ and~$v$ are bounded. The proof of the claim is divided into six steps:
\begin{itemize}
\item[1)] $v$ is bounded on $M_v$. Seeking a contradiction, assume that~$v$ is unbounded on $M_v$. Then, since~$v_{\rm ref}$ is bounded and for all $\eps>0$ we have $\left.\psi_v\right|_{[\eps,\infty)}\!\!\in\!\cL^{\infty}([\eps,\infty)\!\to\!\R)$, there exists $t^*\in M_v,~t^*>0,$  such that $\vert v(t^*) \vert> v_{\rm ref}(t^*)+\psi_v(t^*)$. This contradicts $\big(t^*,x(t^*),v(t^*)\big)\in\cD$.
\item[2)] $v$ is bounded from above on $M_d$. This can be proved similar to 1): The existence of $t^*\in M_d$, $t^*>0$, such that $v(t^*)>v_{\rm ref}(t^*)+\psi_v(t^*)$ yields a contradiction.
\item[3)] $x$ is bounded from above on $M_v$. Assuming the opposite and invoking~1) as well as boundedness of~$x_l$ gives existence of $t^*\in M_v$, $t^*>0$, so that $x(t^*)>x_l(t^*)-\lambda_1v(t^*)-\lambda_2$, which implies $\big(t^*,x(t^*),v(t^*)\big)\notin \cD$, a contradiction.
\item[4)] $x$ is bounded from above on $M_d$. Since $\big(t,x(t),v(t)\big)\in\cD$ for all $t\in[0,\omega)$, we have $\varphi_d(t)\vert x(t)-x_l(t)+\lambda_1v(t)+\lambda_2+\psi_d(t)\vert<1$ for all $t\in M_d$. By assumption~$x_l$ and $1\!/\!\varphi_d$ are bounded on~$\R_{\ge 0}$, thus $y:=\lambda_1v +x$ is bounded on $M_d$. Recalling that $\dot{x}(t)=-\tfrac{1}{\lambda_1}x(t)+\tfrac{1}{\lambda_1}y(t)$ we can estimate, with $\mu:=\tfrac{1}{\lambda_1}$, that for any $t\in [0,\omega)$
\begin{align*}
&x(t)=e^{-\mu t}x^0+\int\limits_{0}^t \mu e^{-\mu (t-s)}y(s)\ds{s}\\
&= e^{- \mu t}x^0+\!\!\!\!\int\limits_{M_v\cap [0,t]}\!\!\!\!\mu e^{-\mu (t-s)}y(s)\ds{s}+\!\!\!\!\int\limits_{M_d\cap [0,t]}\!\!\!\!\mu e^{-\mu (t-s)}y(s)\ds{s}\\
&\leq \vert x^0\vert+\!\!\!\!\int\limits_{M_v\cap [0,t]}\!\!\!\!\mu e^{-\mu (t-s)}y(s)\ds{s}+\!\!\!\!\int\limits_{M_d\cap [0,t]}\!\!\!\! \vert \mu e^{-\mu (t-s)}y(s)\vert \ds{s}.
\end{align*}
Since $y = \lambda_1v+x$ is bounded on $M_d$, there exists $M\geq0$ such that
\begin{align*}
\int\limits_{M_d\cap[0,t]}\vert \mu e^{-\mu (t-s)}y(s)\vert \ds{s}=: K_1 <\infty.
\end{align*}
Then we obtain
\begin{align*}
x(t)&\leq \vert x^0\vert+K_1+\int\limits_{M_v\cap [0,t]}\mu e^{-\mu(t-s)}(\lambda_1v(s)+x(s))\ds{s}\\
 &\overset{1),\,3)}{\le} K_2 < \infty
 \end{align*}
 for some $K_2>0$.
\item[5)] $v$ is bounded from below on $M_d$. Seeking a contradiction assume that~$v$ is unbounded from below on~$M_d$. Since~$v$ is bounded on~$M_v$ by~1), there exists a sequence $(t_k)_{k\in \N}\subseteq M_d$ with $\lim_{k\to\infty} v(t_k) = -\infty$. Since $\vert x(t_k)-x_l(t_k)+\lambda_1v(t_k)+\lambda_2+\psi_d(t_k)\vert <\psi_d(t_k)$ for all $k\in\N$, it follows that $\lim_{k\to\infty} x(t_k) = \infty$, which contradicts~4).
\item[6)] $x$ is bounded from below. Let $t\in(0,\omega)$ and first assume that $\big(t,x(t),v(t)\big)\in\overline{\cD^d_v\cup \cD_d}$. Since $x_l$ and~$1\!/\!\varphi_d$ are bounded on~$\R_{\ge 0}$, the claim immediately follows from~2) invoking the inequality
\[
x(t)\geq x_l(t)-\lambda_1v(t)-\lambda_2-2\psi_d(t)\ge K_3 >-\infty.
\]
Now let $\big(t,x(t),v(t)\big)\in\mathring{\cD_v}$. Then there exists $\eps>0$ such that $(t-\eps,t+\eps)\subseteq M_v$.
Since~$M_d$ is a closed subset of~$\R_{\ge 0}$ by Step~3, the set $\setdef{s\in M_d}{s\le t-\eps}$ is closed as well and its supremum is again an element of~$M_d$, or it equals $-\infty$. If the latter is the case, then choose $h=t$, otherwise choose $h>0$ such that $(t-h,t)\subseteq M_v$ and $t-h\in M_d$. By the mean value theorem, there exists $s\in (t-h,t)$ so that
\begin{align}
v(s)=\frac{x(t)-x(t-h)}{h}.\label{eq:v}
\end{align}
By~2) and $t-h\in M_d$ (or $t-h = 0$) we find that
\begin{align}
x(t-h)&\geq x_l(t-h)-\lambda_1v(t-h)-\lambda_2-2\psi_d(t-h)\notag\\
&\ge K_4 >-\infty.\label{eq:x(t-h)}
\end{align}
Finally, applying the assumption that $v_{\rm ref}(s)\geq 0$ for all $s\ge 0$,~\eqref{eq:v} together with~\eqref{eq:x(t-h)} implies
\begin{align*}
x(t)&=hv(s)+x(t-h)\geq h\big( \underbrace{v(s)-v_{\rm ref}(s)}_{> -\varphi_v(s)^{-1}} \big)+\underbrace{x(t-h)}_{\ge K_4}\\
&\ge K_5 >-\infty.
\end{align*}
\end{itemize}

\underline{\textit{Ste\smash{p} 5}:} We show (iii). Boundedness of $\varphi_v$ and $\varphi_d$ allows us to define
\begin{align*}
h_v&:=\inf_{t>0}\psi_v(t)>0\quad\text{and}\quad h_d:=\inf_{t>0}\psi_d(t)>0.
\end{align*}
Let $\tau\in (0,\omega)$ be arbitrary but fixed.
Since $\varphi_v\in \Phi$, we find that  $\left.\dot \psi_v\right|_{[\tau,\infty)}$ is bounded.
Thus, there exists a Lipschitz bound $L_v> 0$ of $\left.\psi_v\right|_{[\tau,\infty)}$. Similarly, we find a Lipschitz bound $L_d> 0$ of $\left.\psi_d\right|_{[\tau,\infty)}$. Since~$v$ is bounded by Step~4, it follows that $F_g(\cdot)$, $F_r\big(v(\cdot)\big)$, $F_a\big(\cdot,v(\cdot)\big)$, $\delta(\cdot)$ and $\dot{v}_{\rm ref}(\cdot)$ are bounded and hence there exists $K>0$ independent of~$\omega$ such that
 \begin{equation}\label{eq:K}
 \begin{aligned}
&\text{for almost all\ $t\in[0,\omega)$}:\\  &\left|\tfrac{1}{m}\big(F_r(v(t))+F_a(t,v(t))+F_g(t) - \delta(t)\big)+\dot{v}_{\rm ref}(t) \right| \le K.
\end{aligned}
\end{equation}
Now choose $\eps>0$ small enough so that
\begin{align*}
\eps&\leq \min\left\{\tfrac{h_v}{2},\tfrac{h_d}{2},\inf\limits_{t\in (0,\tau]}\big(\psi_v(t)\!\!-\!e_v(t)\big),\inf\limits_{t\in (0,\tau]}\big(\psi_d(t)\!\!-\!e_d(t)\big),\right.\\
&\left.\ \ \ \ \inf\limits_{t\in (0,\tau]}\Big[\max \big\{0,\psi_v(t)\!\!+\!e_v(t)\big\}+\max\big\{0,\psi_d(t)\!\!+\!e_d(t)\big\}\Big]\!\right\}
\end{align*}
and
\begin{align}
L:=\max\{L_v,L_d\}\leq -K+\frac{\min\{h_d^2,h_v^2\}}{4\eps m}.\label{eq:L}
\end{align}
We show that
\begin{equation}\label{show}
\begin{aligned}
\forall\,t\in (0,\omega):\ \ e_v(t)&\leq \psi_v(t)-\eps,\\
e_d(t)&\leq \psi_d(t)-\eps, \\
\eps&\leq\max\{0,\psi_v(t)+e_v(t)\}\\
&\quad+\max\{0,\psi_d(t)+e_d(t)\};
\end{aligned}
\end{equation}
for the first two inequalities we use a standard procedure in funnel control, see e.g.~\cite{BergHoan18}, but the proof of the third is much more involved. First note that by definition of~$\eps$ the inequalities~\eqref{show} hold on $(0,\tau]$. Seeking a contradiction, suppose there exists $t_1\in (\tau,\omega)$ such that at least one of the following three cases occurs.

\textit{Case \RN{1}:}\ $\psi_v(t_1)-e_v(t_1)<\eps$. Set
\begin{align*}
t_0:=\max\setdef{t\in [\tau,t_1)}{\psi_v(t)-e_v(t) =\eps}.
\end{align*}
Then, for all $t\in[t_0,t_1]$, we have that
\begin{equation}\label{eq:kvev1}
\begin{aligned}
\psi_v(t)-e_v(t)&\leq\eps,\\
 e_v(t) \geq \psi_v(t)-\eps\geq h_v-\eps&\geq \tfrac{h_v}{2},\\
k_v(t)=\tfrac{1}{1-\varphi_v(t)^2\vert e_v(t)\vert ^2}\geq\tfrac{1}{2\eps\varphi_v(t)}&\geq \tfrac{h_v}{2\eps}.
\end{aligned}
\end{equation}
In particular,~\eqref{eq:kvev1} together with~\eqref{eq:fun-con-fin} implies that
\begin{align}
\forall\, t\in[t_0,t_1]:\ u(t)\leq -k_v(t)e_v(t).\label{eq:u}
\end{align}
Now we calculate for almost all $t\in [t_0,t_1]$ that
\begin{align*}
&\dot{e_v}(t) =\tfrac{1}{m}\Big( u(t)-F_r\big(v(t)\big)-F_a\big(t,v(t)\big)-F_g(t) +\delta(t)\Big)-\dot{v}_{\rm ref}(t)\\
																	&\overset{\eqref{eq:u}}{\leq} \tfrac{1}{m}\Big(\!-\!k_v(t)e_v(t)\!-\!F_r\big(v(t)\big)\!-\!F_a\big(t,v(t)\big)\!-\!F_g(t) +\delta(t)\Big)\!-\!\dot{v}_{\rm ref}(t)\\
																	 &\overset{\eqref{eq:K}}{\leq} K-\tfrac{1}{m}k_v(t) e_v(t)																\overset{\eqref{eq:kvev1}}{\leq} K-\tfrac{1}{m}\tfrac{h_v}{2\eps}\tfrac{h_v}{2}\overset{\eqref{eq:L}}{\leq} -L\leq -L_v.
\end{align*}
Therefore,
\begin{align*}
e_v(t_1)-e_v(t_0) &=\int_{t_0}^{t_1} \dot e_v(t)\ds{t}
                                       \leq -L_v(t_1-t_0) \\&\leq -\vert \psi_v(t_1)-\psi_v(t_0)\vert \leq \psi_v(t_1)-                                        \psi_v(t_0)\!.
\end{align*}
Hence, $\eps=\psi_v(t_0)- e_v(t_0) \leq\psi_v(t_1)- e_v(t_1) <\eps$, a contradiction.

\textit{Case \RN{2}:}\ $\psi_d(t_1)-e_d(t_1)<\eps$. The proof is analogous to \textit{Case \RN{1}} and omitted.

\textit{Case \RN{3}:}\ $\max \{0,\psi_v(t_1)+e_v(t_1)\}+\max\{0,\psi_d(t_1)+e_d(t_1)\}<\eps$. Set
\begin{align*}
t_0:=\max\setdef{t\in [\tau,t_1)}{\!\!\!\begin{array}{l} \max \{0,\psi_v(t)+e_v(t)\}\\+\max\{0,\psi_d(t)+e_d(t)\}=\eps\end{array}\!\!\!\!\!}.
\end{align*}
Now we distinguish another three cases.\\
\textit{Case \RN{3}.A:}\ $0<\psi_d(t_0)+e_d(t_0)=\eps$ and $\psi_v(t_0)+e_v(t_0)\leq 0$. By continuity and definition of $t_0$, there exists $\tilde{t}_1\in (t_0,t_1)$ such that
\begin{align*}
\forall\, t\in(t_0,\tilde{t}_1]:\ 0<\psi_d(t)+e_d(t) <\eps.
\end{align*}
Since $e_v(t_0) \le -\psi_v(t_0)$ it further follows from Step~2 and~\eqref{eq:fun-con-fin} that there exists $\hat t_1\in (t_0,\tilde t_1]$ such that
\begin{align}
\forall\, t\in[t_0,\hat{t}_1]:\ u(t) = -k_d(t) e_d(t).\label{eq:u3}
\end{align}
Moreover, we have for all $t\in[t_0,\hat{t}_1]$ that
\begin{equation}\label{eq:kvev3}
\begin{aligned}
\psi_d(t)+e_d(t)&\leq\eps,\\
-e_d(t)\geq \psi_d(t)-\eps\geq h_d-\eps&\geq \tfrac{h_d}{2},\\
k_d(t)=\tfrac{1}{1-\varphi_d(t)^2\vert e_d(t)\vert ^2}\geq\tfrac{1}{2\eps\varphi_d(t)}&\geq \tfrac{h_d}{2\eps}.
\end{aligned}
\end{equation}
Now we calculate for almost all $ t\in [t_0,\hat{t}_1]$ that
\begin{align*}
&-\dot{e_d}(t)=\tfrac{1}{m}\Big(-u(t)+F_r\big(v(t)\big)+F_a\big(t,v(t)\big)+F_g(t) -\delta(t)\Big)\\
&\qquad\qquad+\dot{v}_{\rm ref}(t)\\
																	&\overset{\eqref{eq:u3}}{=} \tfrac{1}{m}\Big(k_d(t)e_d(t)\!+\!F_r\big(v(t)\big)\!+\!F_a\big(t,v(t)\big)\!+\!F_g(t)-\delta(t)\Big)\!+\!\dot{v}_{\rm ref}(t)\\
																	 &\overset{\eqref{eq:K}}{\leq} K + \tfrac{1}{m}k_d(t)e_d(t)
																	\overset{\eqref{eq:kvev3}}{\leq} K-\tfrac{1}{m}\tfrac{h_d}{2\eps}\tfrac{h_d}{2}\overset{\eqref{eq:L}}{\leq} -L\leq -L_d.
\end{align*}
Therefore,
\begin{align*}
e_d(t_0)-e_d(\hat{t}_1)&=\int_{t_0}^{\hat{t}_1} -\dot e_d(t)\,\ds{t}
                                       \leq -L_d(\hat{t}_1-t_0) \\&\leq -\vert \psi_d(\hat{t}_1)\!-\psi_d(t_0)\vert \leq \psi_d(\hat{t}_1)\!-                                        \psi_d(t_0)\!.
\end{align*}
Hence, $\eps=\psi_d(t_0)+e_d(t_0)\leq\psi_d(\hat{t}_1)+e_d(\hat{t}_1)<\eps$, a contradiction.\\
\textit{Case \RN{3}.B:}\ $0<\psi_v(t_0)+e_v(t_0)=\eps$ and $\psi_d(t_0)+e_d(t_0)\leq 0$. The proof is analogous to \textit{Case \RN{3}.A} and omitted.\\
\textit{Case \RN{3}.C:}\ $0<\psi_v(t_0)+e_v(t_0)<\eps$ and $0<\psi_d(t_0)+e_d(t_0)<\eps$. Then, by definition of $t_0$ and continuity, there exists $\hat{t}_1\in (t_0,t_1)$ such that either
\begin{gather}
\forall\, t\in(t_0,\hat{t}_1]:\ 0\!<\!\psi_d(t)\!+\!e_d(t)\!<\!\psi_d(t_0)\!+\!e_d(t_0)\!<\!\eps\label{eq:C.e_v}\\
{\rm or}\nonumber\\
\forall\, t\in(t_0,\hat{t}_1]:\ 0\!<\!\psi_v(t)\!+\!e_v(t)\!<\!\psi_v(t_0)\!+\!e_v(t_0)\!<\!\eps.\label{eq:C.e_d}
\end{gather}
Furthermore, if~\eqref{eq:C.e_v}, then also $0<\psi_v(t)+e_v(t)<\eps$ holds for all $t\in(t_0,\hat t_1)$, and if~\eqref{eq:C.e_d}, then also $0<\psi_d(t)+e_d(t)<\eps$ holds for all $t\in(t_0,\hat t_1)$. Therefore, in both cases~\eqref{eq:C.e_v} and~\eqref{eq:C.e_d} we have that~\eqref{eq:kvev3} and
\begin{equation}\label{eq:kvev4}
\begin{aligned}
\psi_v(t)+e_v(t)&\leq\eps,\\
-e_v(t)\geq \psi_v(t)-\eps\geq h_v-\eps&\geq \tfrac{h_v}{2},\\
k_v(t)=\tfrac{1}{1-\varphi_v(t)^2\vert e_v(t)\vert ^2}\geq\tfrac{1}{2\eps\varphi_v(t)}&\geq \tfrac{h_v}{2\eps}
\end{aligned}
\end{equation}
hold. Then, in both cases, we calculate for all $t\in [t_0,\hat{t}_1]$ that
\[
    -k_d(t) e_d(t) \stackrel{\eqref{eq:kvev3}}{\ge} \tfrac{h_d^2}{4\eps}\quad \wedge\quad -k_v(t) e_v(t) \stackrel{\eqref{eq:kvev4}}{\ge} \tfrac{h_v^2}{4\eps},
\]
and by~\eqref{eq:fun-con-fin} this implies that
\begin{equation}\label{eq:est-u-lower}
    u(t) \ge \frac{\min\{h_d^2, h_v^2\}}{4\eps}.
\end{equation}
Then, similar as in \textit{Case \RN{3}.A}, we find that
\begin{align*}
&-\dot{e_d}(t)=\tfrac{1}{m}\Big(-u(t)+F_r\big(v(t)\big)+F_a\big(t,v(t)\big)+F_g(t)-\delta(t)\Big)\\
&+\dot{v}_{\rm ref}(t) \overset{\eqref{eq:K}}{\leq} K - \tfrac{u(t)}{m} \stackrel{\eqref{eq:est-u-lower}}{\le}  K-\frac{\min\{h_d^2, h_v^2\}}{4m \eps}\overset{\eqref{eq:L}}{\leq} -L\leq -L_d.
\end{align*}
Hence, again we obtain
\[
    e_d(t_0)-e_d(\hat{t}_1) \leq - L_d (\hat{t}_1-t_0) \le \psi_d(\hat{t}_1)- \psi_d(t_0),
\]
by which $\psi_d(t_0)+e_d(t_0)\leq\psi_d(\hat{t}_1)+e_d(\hat{t}_1)$, which contradicts~\eqref{eq:C.e_v}. Similarly, we may calculate $-\dot e_v(t) \le -L_v$, from which we infer $\psi_v(t_0)+e_v(t_0)\leq\psi_v(\hat{t}_1)+e_v(\hat{t}_1)$, which contradicts~\eqref{eq:C.e_d}.

\underline{\textit{Ste\smash{p} 6}:} We show that~$u$ is bounded. First observe that for $t_0, t\in(0,\omega)$ such that $t\ge t_0$ and $(t,x(t),v(t))\in\cD_v$ we have by~\eqref{eq:fun-con-fin} that $u(t) = -k_v(t) e_v(t)$ and by Step~5 that $|e_v(t)| \le \psi_v(t) - \eps$. Therefore, we obtain
\begin{align*}
  |u(t)| &= \frac{|e_v(t)|}{(1-\varphi_v(t) |e_v(t)|) (1+\varphi_v(t) |e_v(t)|)} \le \frac{|e_v(t)|}{1-\varphi_v(t) |e_v(t)|}\\
  &\le  \frac{|e_v(t)|}{\varphi_v(t) \eps} \le \frac{1}{\eps}\, \sup_{s\ge t_0} \psi_v(s)^{2}.
\end{align*}
Analogously, for $t\in(0,\omega)$ such that $(t,x(t),v(t))\in\cD_d$ we find that
\[
    |u(t)| \le \frac{1}{\eps}\, \sup_{s\ge 0} \psi_d(s)^{2}.
\]
Now let $t_0, t\in(0,\omega)$ such that $t\ge t_0$ and $(t,x(t),v(t))\in\cD_v^d$. We show that
\[
    |u(t)| \le C_{t_0} := \frac{2}{\eps}\, \max\left\{ \sup_{s\ge t_0} \psi_v(s)^{2}, \sup_{s\ge 0} \psi_d(s)^{2}\right\}.
\]
If $e_v(t)>0$ and $e_d(t)<0$, then $u(t) = -k_v(t) e_v(t)$ by~\eqref{eq:fun-con-fin} and hence we find, similar as above, that $|u(t)| \le C_{t_0}$ is true. Analogously, for $e_v(t)<0$ and $e_d(t)>0$ we have $u(t) = -k_d(t) e_d(t)$ and thus $|u(t)| \le C_{t_0}$ is true as well. In the case $e_v(t)>0$ and $e_d(t)>0$ we have $\psi_v(t) - e_v(t) \ge \eps$ and $\psi_d(t) - e_d(t) \ge \eps$, which gives $k_v(t) \le 1/\big(\eps \varphi_v(t)\big)$ and $k_d(t) \le 1/\big(\eps \varphi_d(t)\big)$, from which we obtain
\[
    -k_v(t) e_v(t) \ge - \frac{1}{\eps \varphi_v(t)^2}\quad\text{and}\quad -k_d(t) e_d(t) \ge - \frac{1}{\eps \varphi_d(t)^2}.
\]
As a consequence
\begin{align*}
   0&\ge u(t) = \min\{-k_v(t) e_v(t), -k_d(t) e_d(t)\}\\
   &\ge \min\left\{ - \frac{1}{\eps \varphi_v(t)^2}, - \frac{1}{\eps \varphi_d(t)^2}\right\} \ge -C_{t_0},
\end{align*}
by which we have $|u(t)| \le C_{t_0}$. Finally, consider the case $e_v(t)<0$ and $e_d(t)<0$. Then we have by Step~5 that $\eps\leq\max\{0,\psi_v(t)+e_v(t)\} +\max\{0,\psi_d(t)+e_d(t)\}$. Using $(t,x(t),v(t))\in\cD_v^d$ this implies
\[
    \max\{ \psi_v(t)+e_v(t), \psi_d(t)+e_d(t)\} \ge \frac{\eps}{2}.
\]
Therefore, we have
\[
    \psi_v(t)+e_v(t) \ge \frac{\eps}{2}\quad\text{or}\quad \psi_d(t)+e_d(t) \ge \frac{\eps}{2}.
\]
If the first statement is true, then
\begin{align*}
     k_v(t) &= \frac{1}{(1-\varphi_v(t) e_v(t)) (1+\varphi_v(t) e_v(t))} \\
     &\le \frac{1}{1+\varphi_v(t) e_v(t)} \le \frac{2}{\varphi_v(t) \eps},
\end{align*}
and if the latter is true, then $k_d(t) \le 2/\big(\varphi_d(t) \eps\big)$. Therefore,
\[
    -k_v(t) e_v(t) \le \frac{2}{\eps \varphi_v(t)^2}\quad\text{or}\quad -k_d(t) e_d(t) \le \frac{2}{\eps \varphi_d(t)^2},
\]
which implies that
\begin{align*}
  0 &\le u(t) = \min\{-k_v(t) e_v(t), -k_d(t) e_d(t)\}\\
  & \le \max\left\{ \frac{2}{\eps \varphi_v(t)^2},  \frac{2}{\eps \varphi_d(t)^2}\right\} \le C_{t_0}.
\end{align*}

\underline{\textit{Ste\smash{p} 7}:} We show that $\omega=\infty$. Note by Step~4 we have that $(x,v):[0,\omega)\to\R^2$ is bounded. Together with~(iii), which we have shown in Step~5, the assumption $\omega<\infty$ implies that the closure of the graph of~$(x,v)$ is a compact subset of~$\cD$, which contradicts the statement found in Step~1. Hence, $\omega=\infty$ and this completes the proof of the theorem.
\end{proof}

While the funnel cruise controller~\eqref{eq:fun-con-fin} is robust and able to guarantee safety at all times, we like to emphasize that, although~$u(\cdot)$ is bounded, it is not possible to respect any control constraints of the form $u_{\min} \le u(t) \le u_{\max}$, which are typically present in any real-world application. However, for practical implementation we may simply use the saturated signal
\[
    \hat u(t) = \begin{cases} u_{\max}, & \mbox{ if $u(t)\ge u_{\max}$},\\ u(t), & \mbox{ if $u_{\min} < u(t) < u_{\max}$}\\ u_{\min}, & \mbox{ if $u(t)\le u_{\min}$},   \end{cases}
\]
where~$u(t)$ is as in~\eqref{eq:fun-con-fin}, as the real control input. Of course, this may cause the velocity or distance error to leave the respective performance funnel, but if the two vehicles have comparable parameters (in particular, comparable masses), then they will evolve back into the respective funnel after some short time. If the masses of the vehicles differ a lot this may not be true. For instance, a truck cannot decelerate as fast as a passenger car.

The safety distance may be violated while~$u(t)$ is saturated, but this does not automatically mean that a collision occurs. Future research is necessary to identify ranges for the parameters of the leader and follower vehicle so that collision avoidance can still be guaranteed.

\section{Simulations}\label{Sec:Sim}

We illustrate the funnel cruise controller~\eqref{eq:fun-con-fin} for three different scenarios which may occur in daily traffic. The first standard scenario is that the follower vehicle, with a constant favourite velocity~$v_{\rm ref}$, is far away from the leader, catches up and follows it for some time until the leader accelerates past~$v_{\rm ref}$. The second scenario illustrates that safety is guaranteed even in the case of a full brake of the leader vehicle. In the last scenario the leader vehicle has a strongly varying acceleration.

For the simulations we will use some typical parameter values for the model~\eqref{eq:Sys} which are taken from~\cite{AstrMurr08} and summarized in Table~\ref{Tab:Param}.

\begin{table}[h!tb]
\centering
\begin{tabular}{|c|c|c|c|c|c|}
  \hline
    $m$ & $\theta(t)$ & $\rho(t)$ & $C_d$ & $C_r$ & $A$  \\
   \hline
    $\SI{1300}{\kilo\gram}$ & $\SI{0}{\radian}$ 
     & $\SI{1.3}{\kilo\gram\per\metre^3}$ & 0.32 & 0.01 & $\SI{2.4}{\metre^2}$\\
  \hline
\end{tabular}
\caption{Parameter values for the vehicle model.}
\label{Tab:Param}
\end{table}


The disturbance is chosen as $\delta(\cdot) = 0$ and for the approximated friction model~\eqref{eq:slidingfriction2} we choose the parameter $\alpha = 100$. The initial conditions~\eqref{eq:IC} are chosen as $x^0=\SI{0}{\metre}$ and $v^0 = \SI{15}{\metre\per\second}$ and the constants in~\eqref{eq:xsafe} as $\lambda_1 = \SI{0.5}{\second}$ and $\lambda_2 = \SI{2}{\metre}$. For all three scenarios we choose the favourite velocity~$v_{\rm ref}(t) = \SI{36}{\metre\per\second}$ and the funnel functions
\[
  \varphi_v(t) = \big( 22.5 e^{-0.2 t} + 0.2\big)^{-1},\quad
  \varphi_d(t) = 0.25.
\]

\textbf{Scenario 1}: We have chosen~$x_l$ and~$v_l=\dot x_l$ so that initially the leader vehicle has a larger velocity than the follower, which is hence free to accelerate and catch up using the velocity funnel controller. When the distance is between $x_{\rm safe}(t) + 2\psi_d(t)$ and $x_{\rm safe}(t)$, the distance funnel controller will ensure that the safety distance is not violated. After a period of safe following, where $v(t) < v_{\rm ref}(t) - \vp_v(t)^{-1}$, the leader accelerates to a velocity larger than $v_{\rm ref}(t)$ and the velocity funnel controller will again take over. The simulation of the controller~\eqref{eq:fun-con-fin} for the system~\eqref{eq:Sys} with parameters as in Table~\ref{Tab:Param} and the above described scenario over the time interval $0-50\si{\second}$ has been performed in MATLAB (solver: {\tt ode15s}, rel.\ tol.: $10^{-10}$, abs.\ tol.: $10^{-10}$) and is depicted in Fig.~\ref{fig:sim1}.

\captionsetup[subfloat]{labelformat=empty}
\begin{figure}[h!tb]
  \centering
  \subfloat[Fig.~\ref{fig:sim1}a: Distance to leader and distance funnel]
{
\centering
  \includegraphics[width=9cm]{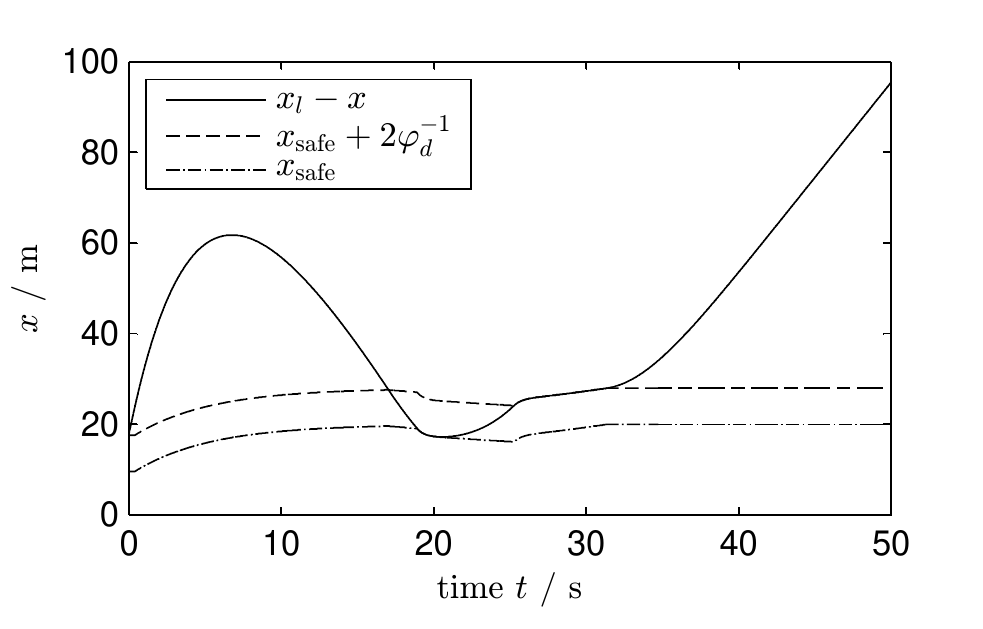}
\label{fig:sim-e}
}\\[-1mm]
\subfloat[Fig.~\ref{fig:sim1}b: Velocities and velocity funnel]
{
\centering
  \includegraphics[width=9cm]{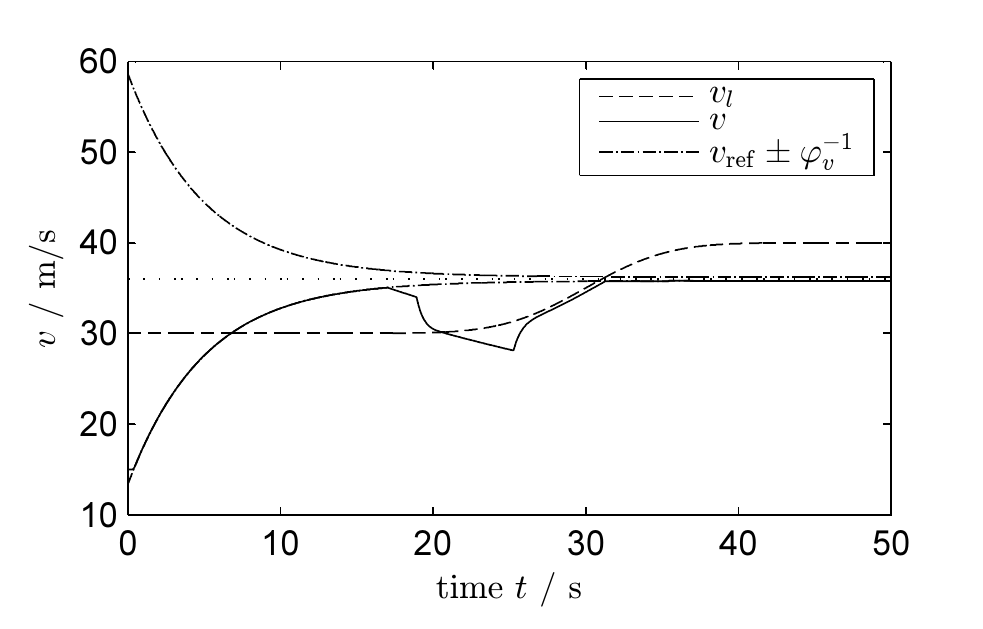}
\label{fig:sim-u}
}
\\[-1mm]
\subfloat[Fig.~\ref{fig:sim1}c: Input and leader engine force]
{
\centering
  \includegraphics[width=9cm]{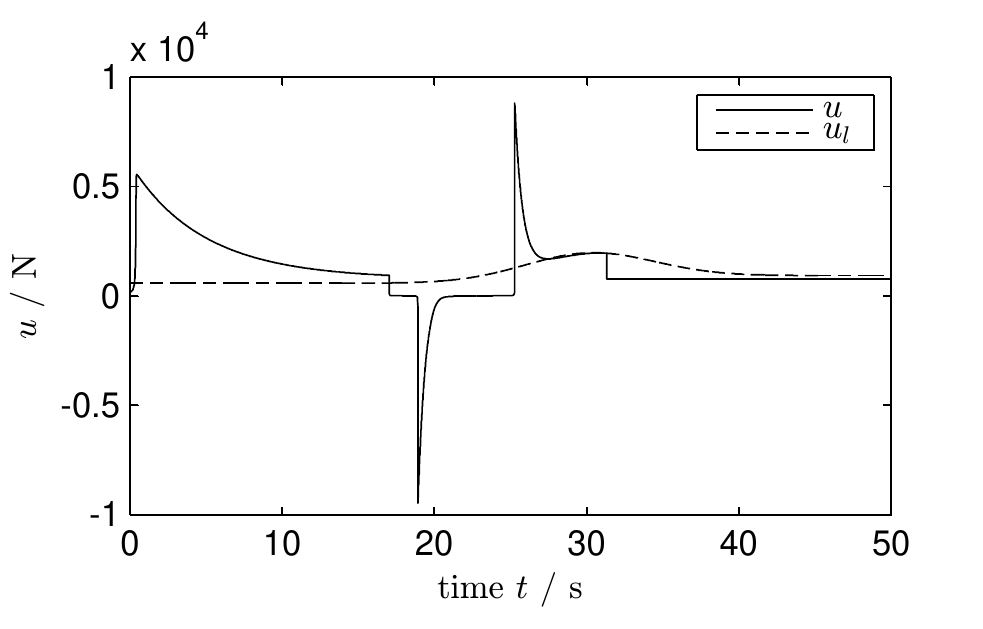}
\label{fig:sim-k}
}
\caption{Simulation of the funnel cruise controller~\eqref{eq:fun-con-fin} for the system~\eqref{eq:Sys} with parameters as in Table~\ref{Tab:Param} in Scenario~1.}
\label{fig:sim1}
\end{figure}

Fig.~\ref{fig:sim1}a shows the distance~$x_l-x$ to the leader and the distance funnel in front of the safety distance. The velocities~$v$ and~$v_l$ are depicted in Fig.~\ref{fig:sim1}b together with the velocity funnel. Fig.~\ref{fig:sim1}c shows the input signal~$u$ generated by the controller and the engine force~$u_l$ of the leader vehicle. We stress that, due to the mass of the vehicles of $\SI{1300}{\kilo\gram}$, the forces~$u$ and~$u_l$ which are between $\pm 10^4\si{\newton}$ correspond to an acceleration between $\pm \SI{8}{\metre\per\second^2}$. It can be seen that the controller achieves the favourite velocity as far as possible while guaranteeing safety at all times, thus the control objectives~(O1) and~(O2) are satisfied.

\textbf{Scenario 2}: We have chosen~$x_l$ and~$v_l$ so that after a period of safe following the leader vehicle suddenly fully brakes. This illustrates that even in such extreme cases the funnel cruise controller is able to guarantee that the safety distance is not violated.

\captionsetup[subfloat]{labelformat=empty}
\begin{figure}[h!tb]
  \centering
  \subfloat[Fig.~\ref{fig:sim2}a: Distance to leader and distance funnel]
{
\centering
  \includegraphics[width=9cm]{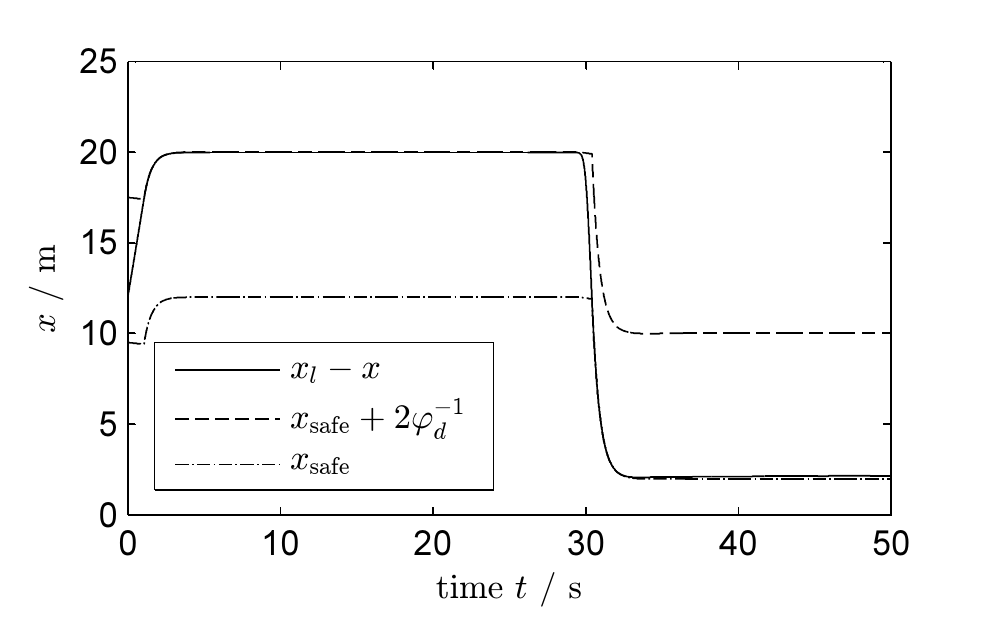}
\label{fig:sim-e}
}\\[-1mm]
\subfloat[Fig.~\ref{fig:sim2}b: Velocities and velocity funnel]
{
\centering
  \includegraphics[width=9cm]{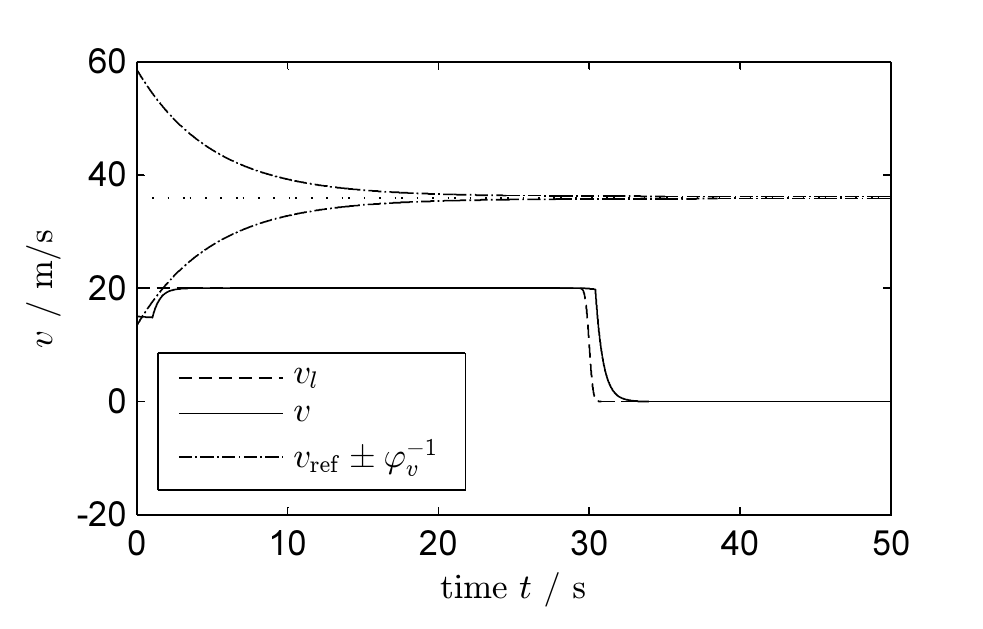}
\label{fig:sim-u}
}
\\[-1mm]
\subfloat[Fig.~\ref{fig:sim2}c: Input and leader engine force]
{
\centering
  \includegraphics[width=9cm]{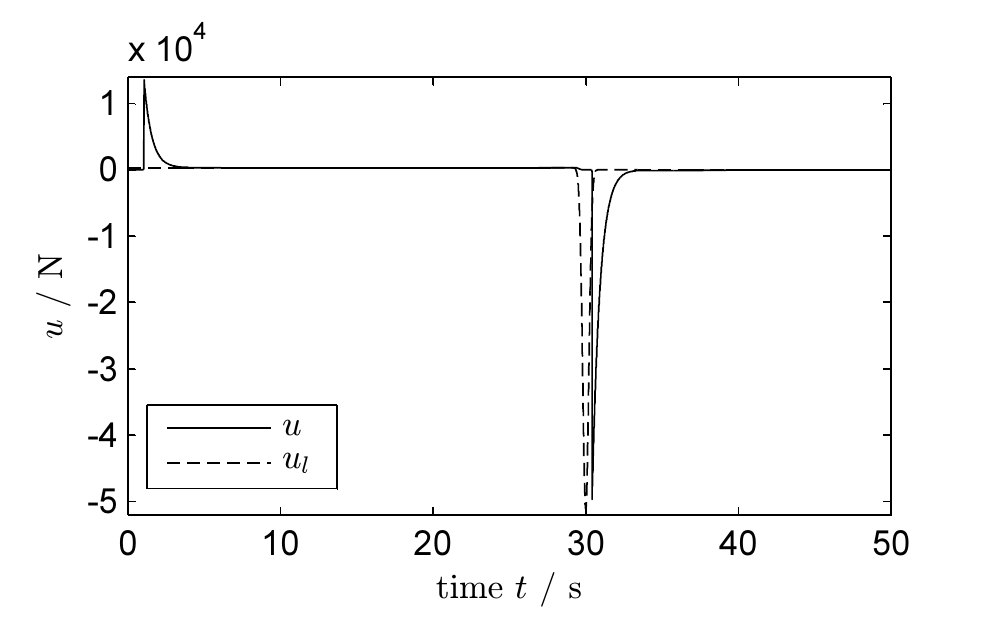}
\label{fig:sim-k}
}
\caption{Simulation of the funnel cruise controller~\eqref{eq:fun-con-fin} for the system~\eqref{eq:Sys} with parameters as in Table~\ref{Tab:Param} in Scenario~2.}
\label{fig:sim2}
\end{figure}

The simulation of the controller~\eqref{eq:fun-con-fin} for the system~\eqref{eq:Sys} with parameters as in Table~\ref{Tab:Param} and the above described scenario over the time interval $0-50\si{\second}$ has been performed in MATLAB (solver: {\tt ode15s}, rel.\ tol.: $10^{-10}$, abs.\ tol.: $10^{-10}$) and is depicted in Fig.~\ref{fig:sim2}. It can be seen in Fig.~\ref{fig:sim2}a that the safety distance is always guaranteed. From Fig.~\ref{fig:sim2}b we can observe that the velocity~$v$ of the follower does not drop as sharp as the velocity~$v_l$ of the leader. Fig.~\ref{fig:sim2}c shows that the engine forces~$u$ and~$u_l$ are quite comparable.

\textbf{Scenario 3}: We have chosen~$x_l$ and~$v_l$ so that the leader vehicle has a strongly varying acceleration. After a period of velocity control where the follower vehicle is free to accelerate close to the favourite velocity $v_{\rm ref}$, it will catch up with the leader vehicle and a period of safe following using distance funnel control follows. During this period several (sharp) acceleration and deceleration maneuvers are necessary to guarantee safety in the face of the mercurial behavior of the leader vehicle.

\captionsetup[subfloat]{labelformat=empty}
\begin{figure}[h!tb]
  \centering
  \subfloat[Fig.~\ref{fig:sim3}a: Distance to leader and distance funnel]
{
\centering
  \includegraphics[width=9cm]{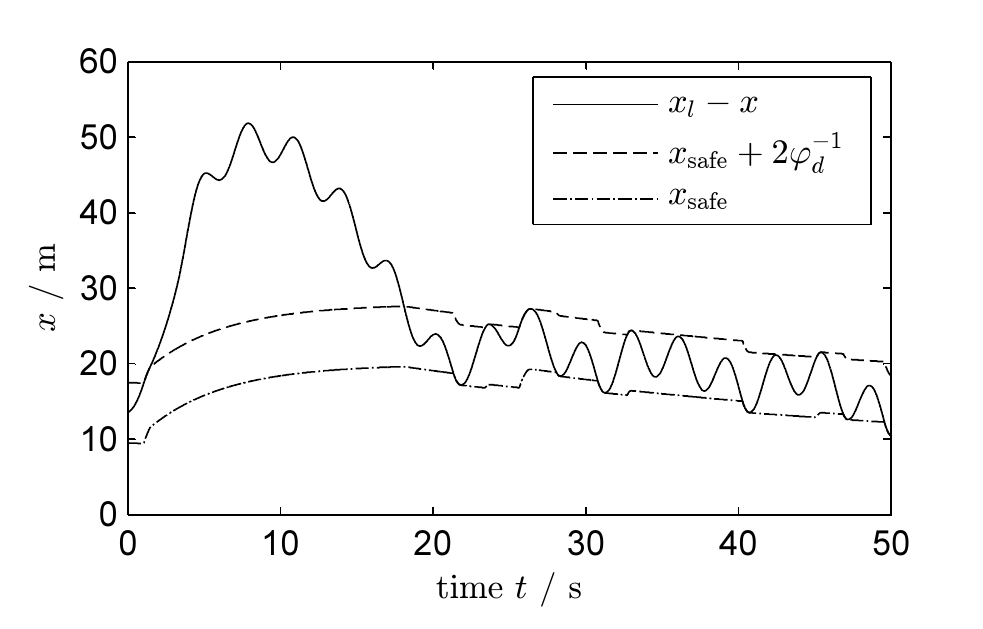}
\label{fig:sim-e}
}\\[-1mm]
\subfloat[Fig.~\ref{fig:sim3}b: Velocities and velocity funnel]
{
\centering
  \includegraphics[width=9cm]{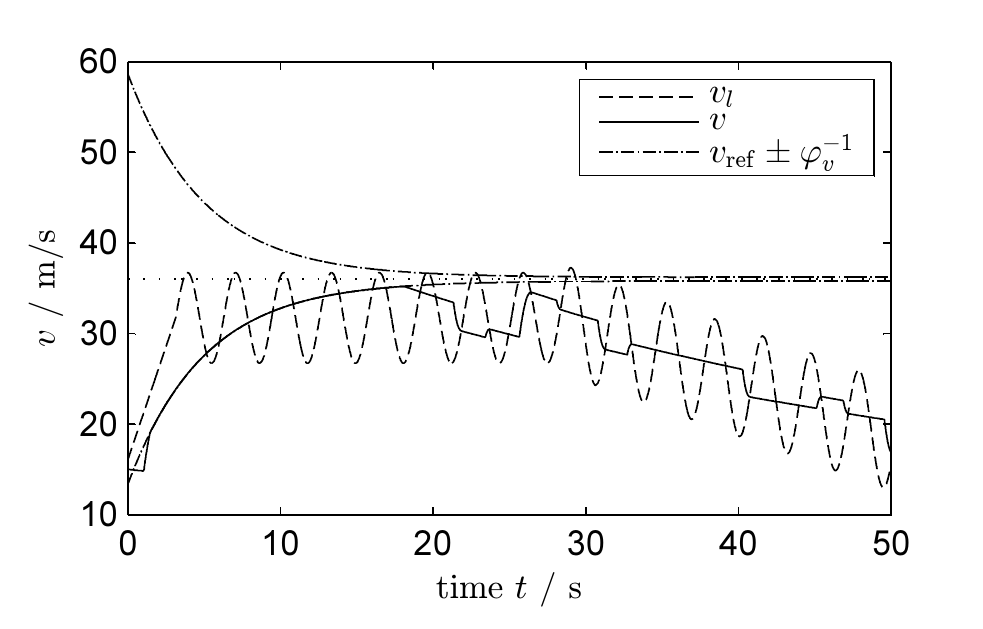}
\label{fig:sim-u}
}
\\[-1mm]
\subfloat[Fig.~\ref{fig:sim3}c: Input and leader engine force]
{
\centering
  \includegraphics[width=9cm]{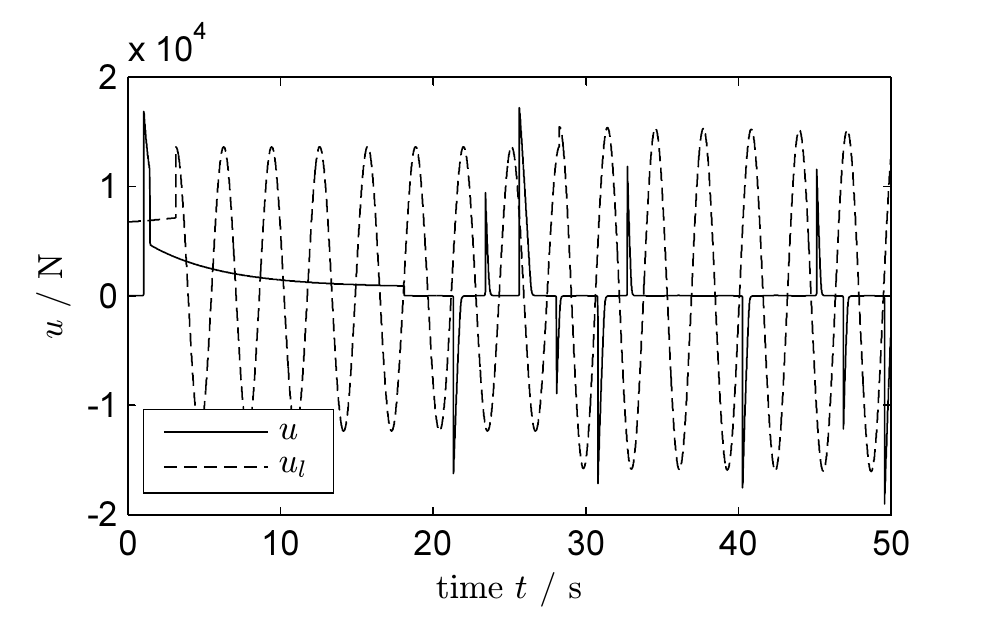}
\label{fig:sim-k}
}
\caption{Simulation of the funnel cruise controller~\eqref{eq:fun-con-fin} for the system~\eqref{eq:Sys} with parameters as in Table~\ref{Tab:Param} in Scenario~3.}
\label{fig:sim3}
\end{figure}

The simulation of the controller~\eqref{eq:fun-con-fin} for the system~\eqref{eq:Sys} with parameters as in Table~\ref{Tab:Param} and the above described scenario over the time interval $0-50\si{\second}$ has been performed in MATLAB (solver: {\tt ode15s}, rel.\ tol.: $10^{-10}$, abs.\ tol.: $10^{-10}$) and is depicted in Fig.~\ref{fig:sim3}. It can be seen in Fig.~\ref{fig:sim3}a that safety is guaranteed even in the case of the strongly varying behavior of the leader. The velocity~$v$ of the follower, as shown in Fig.~\ref{fig:sim3}b, does not vary as strongly as~$v_l$, but shows a much smoother behavior. The engine force~$u$ depicted in Fig.~\ref{fig:sim3}c shows sharp drops and rises, but this cannot be avoided since the funnel cruise controller~\eqref{eq:fun-con-fin} is causal and hence cannot look into the future. This is different from other approaches such as MPC (see e.g.~\cite{BageGarr04,LiLi11,MagdAlth17}) which is able to incorporate future information since an optimal control problem is solved over some future time interval. However, the drawback of this is that the model~\eqref{eq:Sys} must be known as good as possible.

%
\section{Conclusion}\label{Sec:Concl}
%

In the present paper we proposed a novel and universal adaptive cruise control mechanism which is robust, model-free and guarantees safety at all times. The funnel cruise controller consists of a velocity funnel controller, which is active when the leader vehicle is far away, and a distance funnel controller, which ensures that the safety distance is not violated when the leader vehicle is close. We have given a rigorous proof of feasibility of this controller. Three simulation scenarios illustrate the application of the funnel cruise controller.

The simulations show that, although the funnel cruise controller satisfies the control objectives, the generated control input (engine force of the follower vehicle) usually contains sharp peaks which are not desired in terms of driver comfort. This issue should be resolved by smoothing the peaks e.g.\ by combining the funnel cruise controller with the PI-funnel controller with anti-windup as discussed in~\cite{Hack13}. Furthermore, control constraints are typically present in real-world applications and it should be investigated for which parameters of the vehicles a saturation of the control input can still guarantee that collisions are avoided.

Another topic of future research is the investigation of platoons of several vehicles, each equipped with a funnel cruise controller. An important question is under which conditions string stability of the platoon is achieved and whether some communication between the vehicles must be allowed for this.

\vspace{-2mm}



\bibliographystyle{elsarticle-harv}

\end{document}